\documentclass[12pt]{amsart}
\usepackage[utf8]{inputenc}
\usepackage[T1]{fontenc}
\usepackage{lmodern}
\usepackage[english]{babel}
\usepackage{amsmath,mathtools,amsthm,amssymb,amsfonts}
\usepackage{color}
\usepackage{hyperref}
\usepackage{mathrsfs}
\usepackage{graphicx}
\usepackage{enumitem}
\usepackage{nicefrac}
\usepackage{cleveref}
\usepackage{accents}

\textwidth 6.5in
\oddsidemargin -.02in
\evensidemargin -.02in
\textheight 7.7in
\topmargin .5in

\newcommand{\definedas}{\mathrel{\raise.095ex\hbox{\rm :}\mkern-5.2mu=}}

\newcommand{\R}{\mathbb{R}}

\newcommand{\Sbb}{\mathbb{S}}

\renewcommand{\d}{\,\mathrm{d}}

\newcommand{\btr}[1]{\left\vert#1\right\vert}

\newcommand{\Ric}{\mathrm{Ric}}

\newcommand{\tr}{\text{tr}}

\newcommand{\pr}{\partial}
\newcommand{\veps}{\varepsilon}

\newcommand{\lm}[2]{\lim\limits_{#1\to #2}}

\def\tr{\textmd{tr}}
\def\dv{\textnormal{div}}

\def\Lap{\Delta}

\def\Ric{\textnormal{Ric}}

\def\dint{\displaystyle\int}
\def\R{\mathbb{R}}

\def\R{\mathbb{R}}

\def\la{\langle}
\def\ra{\rangle}
\def\S{\Sigma}
\def\({\left(}
\def\){\right)}

\def\={\stackrel{*}{=}}
\def\To{\longrightarrow}

\def\s{\sigma}

\def\w{\omega}
\def\g{\gamma}

\def\ADM{\textnormal{ADM}}
\def\bS{\mathbb{S}}

\def\defeq{\coloneqq}

\def\bmu{\boldsymbol\mu}
\def\bJ{\mathbf{J}}

\def\wt{\widetilde{\tau}}
\def\AF{\textnormal{AF}}
\def\bc{\mathbf{c}}

\newcommand{\ddt}[2]{\frac{d #1}{d #2}}
\newcommand{\dds}[2]{\frac{\pr #1}{\pr #2}}

\theoremstyle{Definition}
\newtheorem{thm}{Theorem}[section]
\newtheorem{prop}[thm]{Proposition}

\theoremstyle{definition}
\newtheorem{defi}[thm]{Definition}
\newtheorem{lem}[thm]{Lemma}

\newtheorem{bem}[thm]{Remark}
\newtheorem{kor}[thm]{Corollary}

\newtheorem{conj}[thm]{Conjecture}

\begin{document}
\title[Families of non time-symmetric IDS and Penrose-like inequalities]{Families of non time-symmetric initial data sets and Penrose-like energy inequalities}
\begin{abstract}
Motivated by solving the constraint equations in the evolutionary form suggested by R\'acz in \cite{Racz}, we propose a family of asymptotically flat initial data sets which are ``asymptotically spherically symmetric'' at infinity. Within this family, we obtain Penrose-like energy estimates and establish the existence of solutions for the constraint equations in the spherical symmetric and totally umbilic cases.
\end{abstract}
\author[{Cabrera~Pacheco}]{ Armando J. {Cabrera~Pacheco}}
\address{Department of Mathematics, Universit\"at T\"ubingen, 72076 T\"ubingen, Germany}
\curraddr{T\"ubingen AI Center, Universit\"at T\"ubingen, 72076 T\"ubingen, Germany}
\email{a.cabrera@uni-tuebingen.de}

\author[Wolff]{Markus Wolff}
\address{Department of Mathematics, Universit\"at T\"ubingen, 72076 T\"ubingen, Germany}
\email{wolff@math.uni-tuebingen.de}
\maketitle
\thispagestyle{empty}	

	\section{Introduction}
	
	The fundamental work of Choquet-Bruhat \cite{Choquet} provided the foundations for the study of \emph{initial data sets} for the Einstein equations, that is, spacelike slices of spacetimes $(\mathbf{M},\mathbf{g})$. In this context, the Einstein equations
	\begin{equation}\label{eq-EE}
	\Ric(\mathbf{g}) - \frac{1}{2} R(\mathbf{g})\mathbf{g} + \Lambda \mathbf{g} = 8 \pi \mathbf{T}
	\end{equation}
	are seen as an initial value problem, where $\Ric(\mathbf{g})$ denotes the Ricci tensor and $R(\mathbf{g})$ the scalar curvature of $\mathbf{g}$, $\Lambda$ the cosmological constant and $\mathbf{T}$ the energy-momentum tensor. The idea is to solve \eqref{eq-EE} by providing appropriate initial values, consisting of a Riemannian manifold $(M,g)$ and symmetric (0,2)-tensor $K$ satisfying the so called \emph{constraint equations} (see Equations \eqref{eq-CC-ham} and \eqref{eq-CC-mom} below). The constraint equations constitute a PDE system themselves and their solutions $(M,g,K)$ are referred to as \emph{initial data sets}. The results (for $\mathbf{T}=0$) of Choquet-Bruhat ensure the existence of a maximal solution, and hence a (vacuum) spacetime $(\mathbf{M},\mathbf{g})$ containing $(M,g)$ as a spacelike slice with second fundamental form $K$. Subsequent research has been carried out to extend this result for the $\mathbf{T} \neq 0$ case (see \cite{CB-book}). Having the existence of spacetimes as the evolution of an initial data set at hand, a natural question arises: can we find physically meaningful solutions to the constraint equations? We refer to the reader to a recent survey by Carlotto \cite{Carlotto} for an comprehensive description of the state of the art of this fundamental problem in mathematical relativity.
	
	In this work we focus on isolated systems, that is, initial data sets $(M,g,K)$ for which the matter content is contained in a finite region and which have specific decay conditions at infinity (leading to either \emph{asymptotically flat}, \emph{asymptotically hyperbolic} or \emph{asymptotically hyperboloidal} initial data sets, see Section~\ref{sec_prelim} for the corresponding definitions) making appropriate notions of total mass and total momentum well defined. Moreover, we assume that for $n \geq 3$, $M = [r_0, \infty) \times \bS^{n-1}$ and that
		\begin{align}
		g &= N(r,\cdot)^2 dr^2 + r^2 \s(r,\cdot),  \label{eq-gintro}\\
		K &= k(r,\cdot) N(r,\cdot)^2 dr^2 + p(r,\cdot) r^2 \s(r,\cdot). \label{eq-Kintro}
		\end{align}
		where  $N \defeq N(r,\cdot)$, $k \defeq k(r,\cdot)$ and $p \defeq p(r,\cdot)$ are smooth functions on $M$ ($r_0 \leq r < \infty$), and $\{ \s(r,\cdot) \}_{r_0 \leq r < \infty}$ is a smooth family of metrics on $\bS^{n-1}$.
		
		The time-symmetric case $(K=0)$ for $n=3$ was considered by Bartnik in \cite{Bartnik-QS}. By considering $\s(r,\cdot)=\s_{\bS^2}$, where $\s_{\bS^2}$ denotes the standard round metric on $\bS^2$, he showed that the \emph{lapse} $N$ satisfies a parabolic equation (actually he did it for a more general form of the metric) and gave sufficient conditions for a solution to exists, and for it to give rise to an asymptotically flat (time-symmetric) initial data set. In fact, his construction yields a family of time-symmetric initial data sets for which the Riemannian Penrose inequality holds, which was not yet proven at the time. Further extensions and adaptations of Bartnik's method include the works of Lin \cite{Lin}, Jang \cite{Jang}, Shi--Tam \cite{S-T}, Smith \cite{Smith} and Smith--Weinstein \cite{SW1,SW2}. For the non time-symmetric case $(K \neq 0)$ not much is known, either about solutions to the constraint equations (see \cite{Carlotto}) or the Penrose inequality (see \cite{Mars}). For the general case $K \neq 0$, R\'acz in \cite{Racz} recasts the constraint equations as an evolutionary problem. R\'acz' setting reduces to Bartnik's equation in the time-symmetric case. Moreover, R\'acz shows short time existence of solutions, providing the first steps towards solving the constraint equations in a general form in this context, however, the long time existence and conditions to guarantee solutions to be asymptotically flat were posed as an open problem. A different approach to this problem was studied by Sharples \cite{Sharples}.
		
		Inspired by the works of Bartnik \cite{Bartnik-QS} and R\'acz \cite{Racz}, we construct new families of (non time-symmetric) initial data sets arising as solutions of the constraint equations. More precisely, by setting the family of metrics $\{ \s(r,\cdot) \}_{r_0 \leq r < \infty}$ to be given by a geometric flow on $\bS^{n-1}$ converging exponentially fast to the round metric $\s_{\bS^{n-1}}$, we show that in the totally umbilic case ($p=k$ in \eqref{eq-Kintro}) the constraint equations are solvable by prescribing appropriate components of the energy-momentum tensor $\mathbf{T}$, namely, the \emph{energy density} and the radial component of the \emph{momentum density} (see Theorem~\ref{thm-existence-totally-umbilical}). Similarly, for the spherically symmetric case, the constraint equations decouple and become a system of ODE's with global solutions exhibiting the desired decay (see Proposition \ref{prop_sphericalsymmetry}). Although existence of solutions to the constraint equations in spherically symmetry are known to exists and have been widely studied (see \cite{Mars} and references therein), we believe that it is important to study them in the context of the system of equations obtained by R\'acz in \cite{Racz}. We mainly perform our analysis for asymptotically flat manifolds, and for $n=3$, but we also provide the tools needed for the general case $n \geq 3$ and the asymptotically hyperbolic case ($\Lambda < 0$).
		
		In a similar way to \cite{Bartnik-QS}, by studying manifolds with this particular structure we are able to construct families of initial data sets for which new  Penrose-like energy inequalities hold. Here, by this we mean an inequality of the form
		\begin{equation*}
		E_{\ADM} \geq \sqrt{\frac{|\pr M|}{16 \pi}}
		\end{equation*}
		where $E_{\ADM}$ is the ADM energy. In spherical symmetry the ADM mass, $m_{\ADM}$, coincides with $E_{\ADM}$ since the momentum tensor vanishes (see \cite{Lee-GRbook} for example), thus in this case the energy inequality actually implies the Riemannian Penrose inequality. From this point of view, our results can be considered as extending the Penrose (energy) inequality known in spherical symmetry to cases where spherical symmetry does not hold. However, the manifolds considered here are ``rotationally symmetric near infinity'', in the sense that the metric on $\bS^2$ is very close to the round metric at infinity. More precisely, we prove the following.

		\begin{thm}\label{thm-main}
			Let $(M,g,K)$ be an asymptotically flat initial data set of the form \eqref{eq-gintro}-\eqref{eq-Kintro} and $\{ \S_r \}_{r \in [r_0,\infty)} \subset M$, where $\S_r \defeq \{r\} \times \bS^2$. Suppose that $\{\s(r)\}_{r\in[r_0,\infty)}$ is a smooth family of metrics generated by a volume-preserving geometric flow that converges exponentially to the standard metric on $\bS^2$. Suppose that the inner boundary, $\S_{r_0}$, is a generalized apparent horizon and that the family $\{ \S_r \}_{r \in (r_0,\infty)}$ satisfies a strictly untrapped condition \emph{(}see Equation~\eqref{eq_strictlyuntrapped}\emph{)}. If the dominant energy condition holds, then the Hawking energy of $\S_r$, $m_H(\S_r)$, is non-decreasing and 		
			\begin{equation} \label{eq-PTineq}
			E_{\ADM} \geq  \sqrt{\frac{|\S_{r_0}|}{16 \pi}}.
			\end{equation}
			Additionally, there exists a non-negative function $f$ defined on $M$ such that the following refined energy Penrose inequality holds:
			\begin{equation} \label{eq-refinedPTineq}
			E_{\ADM} \ge  \sqrt{\frac{|\S_{r_0}|}{16 \pi}}+\int_{r_0}^{\infty} \int_{\bS^2} r^2 f(r,\cdot) \, dA_{\s(r)}\,dr,
			\end{equation}
			with equality if and only if $(M,g,K)$ is isometric to a spacelike, spherically symmetric slice of a Schwarzschild spacetime.
		\end{thm}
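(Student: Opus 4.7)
The plan is a Geroch-type monotonicity for the (spacetime) Hawking energy along the foliation $\{\S_r\}$, combined with a boundary identification at $\S_{r_0}$ and an asymptotic identification as $r\to\infty$. Because $\s(r,\cdot)$ is volume-preserving on $\bS^2$, $\sigma^{ij}\pr_r\s_{ij}=0$, and therefore $|\S_r|=4\pi r^2$; the outward unit normal in $M$ is $\nu=N^{-1}\pr_r$, the mean curvature of $\S_r$ in $M$ simplifies to $H=2/(Nr)$, and the partial trace $P \defeq \mathrm{tr}_{\S_r} K = 2p$. The Hawking energy thus takes the clean form
\begin{equation*}
m_H(\S_r) \;=\; \frac{r}{2}\left(1 - \frac{1}{4\pi}\int_{\bS^2}\bigl(N^{-2} - p^2r^2\bigr)\, dA_{\s(r)}\right).
\end{equation*}
At $r_0$ the generalized apparent horizon condition $H^2=P^2$ forces the integrand to vanish pointwise, giving $m_H(\S_{r_0})=r_0/2 = \sqrt{|\S_{r_0}|/(16\pi)}$, while the strictly untrapped condition \eqref{eq_strictlyuntrapped} keeps $H^2-P^2>0$ beyond $r_0$, i.e.\ the Hawking integrand positive.

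Next, I would identify $\lim_{r\to\infty} m_H(\S_r) = E_{\ADM}$. The exponential convergence $\s(r,\cdot)\to \s_{\bS^2}$ together with the asymptotic-flatness decay of $N$, $p$ and $k$ permits an expansion of $N^{-2}-p^2r^2$ in inverse powers of $r$; the subleading coefficients reproduce the usual flux integral defining $E_{\ADM}$ on large coordinate spheres, and the exponential closeness to roundness kills the boundary corrections that would appear if the large spheres were not asymptotically round.

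For the monotonicity itself, I would differentiate the compact expression for $m_H(\S_r)$ in $r$ and rewrite the $r$-derivative of the integrand using three inputs: (i) the Gauss equation on $\S_r$ to replace ambient curvature by the intrinsic Gauss curvature of $\s(r)$ plus extrinsic contributions in $H$; (ii) the Hamiltonian constraint to trade $\scal(g)$ for $16\pi\mu + \btr{K}^2 - (\mathrm{tr}_g K)^2$; and (iii) the radial projection of the momentum constraint to bring $\bJ(\nu)$ into the picture and absorb the cross term between $\pr_r p$ and $\pr_r N$. After collection, the derivative takes the form
\begin{equation*}
\ddt{m_H(\S_r)}{r} \;=\; \int_{\bS^2} r^2\, f(r,\cdot)\, dA_{\s(r)},
\end{equation*}
where $f\ge 0$ is a sum of three manifestly non-negative pieces: a Kazdan--Warner type term measuring the deviation of the Gauss curvature of $\s(r)$ from $1$ (non-negative by Cauchy--Schwarz on $\bS^2$, using $\int_{\bS^2}\scal(\s(r))\,dA_{\s(r)} = 8\pi$ from Gauss--Bonnet together with the fixed area $4\pi$), a shear contribution from the trace-free part of $K$ tangential to $\S_r$, and the DEC contribution $\mu - \bJ(\nu)\ge 0$. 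Integrating from $r_0$ to $\infty$ and inserting the boundary and asymptotic values above immediately yields both \eqref{eq-PTineq} and the refined estimate \eqref{eq-refinedPTineq}.

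The main technical obstacle is the monotonicity computation: extracting three manifestly non-negative pieces, with no indefinite leftover, requires using the radial momentum constraint to cancel the precise cross term between $\pr_r p$ and $\pr_r N$, and relies on volume-preservation of the flow so that the Kazdan--Warner bound on $\s(r)$ does not acquire an area-derivative correction. It is exactly here that the totally umbilic ansatz $p=k$ in \eqref{eq-Kintro} and the volume-preserving choice of flow are essential. The rigidity statement then follows by tracing back: equality in \eqref{eq-refinedPTineq} forces $f\equiv 0$, which separately forces $\s(r,\cdot)\equiv \s_{\bS^2}$ (so $g$ is a warped-product round metric), $K$ tangential to $\S_r$ is pure trace, and the DEC is saturated with vanishing tangential momentum. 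The data is then spherically symmetric, and Proposition~\ref{prop_sphericalsymmetry} combined with the exact saturation of the Penrose energy bound identifies $(M,g,K)$ as a spacelike, spherically symmetric slice of a Schwarzschild spacetime.
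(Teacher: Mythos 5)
Your overall strategy --- Geroch-type monotonicity of the Hawking energy along $\{\S_r\}$, with the boundary value pinned down by the generalized apparent horizon condition and the limit identified with $E_{\ADM}$ --- is the same as the paper's, and that part is sound. But the way you manufacture the function $f$ breaks the second half of the theorem. You define $f$ so that $\frac{d}{dr}m_H(\S_r)=\int_{\bS^2}r^2 f\,dA_{\s(r)}$ holds \emph{exactly}; integrating then turns \eqref{eq-refinedPTineq} into an identity, so equality holds for every data set in the class and the rigidity claim (``equality iff Schwarzschild slice'') becomes false. The paper instead takes $f$ to be any non-negative function with $f\le|\bJ|_g-N^{-1}|\bJ_0|$ (a measure of the angular part of the momentum density), with the extra property that saturation of this bound forces $f\equiv0$, e.g.\ $f=(1-\veps)\bigl(|\bJ|_g-N^{-1}|\bJ_0|\bigr)$. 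Then $\mathfrak{M}_f(r)=m_H(\S_r)-\int_{r_0}^r\int_{\bS^2}s^2f\,dA_{\s(s)}\,ds$ is monotone but not constant in general, and equality in \eqref{eq-refinedPTineq} forces \emph{all} the remaining non-negative terms in the derivative to vanish --- $|\s'(r)|^2_{\s(r)}$, $|\nabla^{\s(r)}N|^2$, and $\bmu-|\bJ|_g$ --- which is precisely what drives the rigidity (Lemmas~\ref{lemma_rigidity_intermediate} and~\ref{lem_ridigity}). Relatedly, your ``Kazdan--Warner'' piece $1-K(\s(r))$ is not pointwise non-negative; it only integrates to zero by Gauss--Bonnet (using the fixed area $4\pi$), so it cannot be a summand of a pointwise non-negative $f$.

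Two further corrections. First, the totally umbilic ansatz $p=k$ is \emph{not} needed for the monotonicity and is not assumed in the theorem; the computation in Lemma~\ref{lemma-monH} uses only the Hamiltonian constraint and the radial momentum constraint for general $p$, $k$, and $K$ restricted to $\S_r$ is already pure trace by the ansatz \eqref{eq-Kintro}, so there is no tangential shear term from $K$. Second, the term that actually appears in $\frac{d}{dr}m_H$ is $\bmu-rp\,\bJ_0$, not $\bmu-\bJ(\nu)$; one needs the strictly untrapped condition $N^{-1}>r|p|$ to estimate $\bmu-rp\,\bJ_0\ge\bmu-N^{-1}|\bJ_0|\ge\bmu-|\bJ|_g\ge0$. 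In other words, the untrapped hypothesis enters the monotonicity argument itself, not merely the positivity of the Hawking integrand as you suggest.
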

		
		The function $f$ in Theorem~\ref{thm-main} can be considered as a quantity measuring the angular components of the energy-momentum tensor $\mathbf{T}$. We remark that the condition on $\{\s(r)\}_{r\in[r_0,\infty)}$ is natural. In fact, in the work of Bartnik \cite{Bartnik-QS} $,\s(r)=r^2 \s_{\bS^2}$, where $\bS^2$ denotes the standard round metric on $\bS^2$, while in \cite{Lin} and \cite{Jang}, Hamilton's modified Ricci flow is used.
		\\

	This work is organized as follows. In Section~\ref{sec_prelim}, we introduced the family of initial data sets $\mathcal{M}$ and obtain the constraint equations in the evolutionary form suggested by R\'acz \cite{Racz}. In Section~\ref{sec_penrose} we establish Penrose-like energy estimates by making use of the monotonicity of the Hawking energy. In Section~\ref{sec_solving} we study the existence of solutions to the constraint equations within the family $\mathcal{M}$ in the asymptotically flat, hyperbolic and hyperboloidal cases. Finally in Section~\ref{sec_discussion} we discuss potential future works.
	
			\subsection*{Acknowledgements.}
			
			The work of AJCP was funded by Carl Zeiss Foundation and the financial support of the Deutsche Forschungsgemeinschaft through the SPP 2026 “Geometry at Infinity”. Aspects of this work were completed as part of the STSM ``Analytic study of asymptotically flat systems in general relativity with parabolic constructions'' in the context of the European COST Action CA16104 on Gravitational waves, black holes and fundamental physics (GWverse). The authors would like to thank Florian Beyer, Carla Cederbaum, Greg Galloway, Jim Isenberg, Marc Mars and Todd Oliynyk for useful and interesting discussions related to this work. AJCP wants to especially thank Istv\'an R\'acz for his encouragement, support  and hospitality at different stages of this work.
	
	\section{Preliminaries}\label{sec_prelim}
		An \emph{initial data set} is a triple $(M,g,K)$ given by an $n$-dimensional Riemannian manifold $(M,g)$ ($n\ge 3$) and a symmetric $(0,2)$-tensor $K$ that satisfy the \emph{constraint equations}:
		\begin{align} 
		R(g) + (\tr_{g} K)^2 - |K|^2_{g} - 2\Lambda &= 16 \pi \bmu,  \label{eq-CC-ham} \\
		\dv_{g}(K - ((\tr_{g} K) g) )& = 8 \pi \bJ, \label{eq-CC-mom}
		\end{align}
		for a smooth function $\bmu$ and a one-form $\bJ$ on $M$, where $R(g)$ denotes the scalar curvature of $(M,g)$ and $\Lambda$ is a constant known as the \emph{cosmological constant}. We call $\bmu$ the \emph{energy density} and $\bJ$ the \emph{momentum density}, which are usually determined by the matter model of interest. In fact, if $M$ arises as a spacelike hypersurface with induced metric $g$ and second fundamental form $K$ with respect to a timelike, future-pointing unit normal $\vec{n}$ of an ambient spacetime $(\mathbf{M},\mathbf{g})$ satisfying the Einstein Equations (with cosmological constant $\Lambda$) with energy-stress tensor $\mathbf{T}$, then $(M,g,K)$ satisfies the constraint equations with energy density $\bmu:=\mathbf{T}(\vec{n},\vec{n})$ and momentum density $\bJ:=\mathbf{T}(\vec{n},\cdot\vert_{TM})$. Further, we say that $(M,g,K)$ satisfies the \emph{dominant energy condition} (DEC), if
		\begin{align}\label{eq_DEC}
			\bmu\ge \btr{\bJ}_g.
		\end{align}
		\begin{defi}\label{defi_asymflat}
			Let $(M,g,K)$ be an initial data set. We say that $(M,g,K)$ is \emph{asymptotically flat}, if $\Lambda=0$, $\bmu$ and $\bJ$ are integrable on $M$, and there exists a compact set $\Omega\subset M$ such that $M\setminus \Omega$ is diffeomorphic to $\R^n \setminus B_R(0)$ for some $R>0$, and with respect to these coordinates $g$ and $K$ satisfy the following decay conditions:
			\begin{align*}
			g_{ij} &=\delta_{ij} + O_2(|x|^{-a}), \\
			K_{ij} &=O_2(|x|^{-b}),
			\end{align*}
			for $a>\frac{n-2}{2}$ and $b>\frac{n}{2}$.
		\end{defi}
		Under these assumptions, there are well-defined notions of total energy and momentum~\cite{ADM}, called the \emph{ADM energy} and \emph{ADM momentum} defined as
		\begin{align*}
		E_{\ADM} &\defeq \frac{1}{2(n-1)\w_{n-1}} \lm{\rho}{\infty} \int_{\Sbb_{\rho}} (g_{ij,i} - g_{ii,j})\nu^j \, dV_{S_{\rho}}, \\
		\textbf{P}_i &\defeq -\frac{1}{(n-1)\w_{n-1}} \lm{\rho}{\infty}  \int_{\Sbb_{\rho}} (K_{ij} -  (\tr_g K)g_{ij})\nu^j \, dV_{S_{\rho}},
		\end{align*}
		for $i=1,...,n$, where the integrals are taken over the round spheres $\Sbb_\rho$ in $\R^{n}$ centered at the origin and $\nu$ is the standard unit normal to $\Sbb_\rho$. If $E=E_{\ADM}\ge |\textbf{P}|$ (where $|\,\cdot\,|$ denotes the Euclidean norm), we further define the \emph{ADM mass} as
		\[
			m=\sqrt{E^2-|\textbf{P}|^2}.
		\]
		Let $\Sigma$ be an orientable hypersurface in $(M,g,K)$.  Let $H$ denote the mean curvature of $\Sigma$ with respect to a unit normal $\nu$.
		We then define the null expansions $\theta_\pm$ of $\Sigma$ in $(M,g,K)$ as
		\begin{align*}
		\theta_\pm:=H\pm P,
		\end{align*}
		where $P \defeq \tr_{\Sigma}K=\tr_MK-K(\nu,\nu)$. If $\theta_\pm=0$ along $\Sigma$, we call $\Sigma$ a \emph{marginally outer / inner trapped surface} (MOTS/MITS) in $(M,g,K)$. If $H\ge \btr{P}$, we further define the spacetime mean curvature of $\Sigma$ as $\mathcal{H}:=\sqrt{H^2-P^2}$ (cf. Cederbaum--Sakovich \cite{C--S}), and if $\mathcal{H}=0$ we call $\Sigma$ a \emph{generalized apparent horizon}. If $(M,g)$ indeed embedds isometrically into a spacetime $(\mathbf{M},\mathbf{g})$ with timelike, future pointing unit normal $\vec{n}$ and second fundamental form $K$, then the codimension-$2$ mean curvature vector $\vec{\mathcal{H}}$ of any orientable surface $\Sigma\subseteq M\subseteq \mathbf{M}$ can be decomposed as
		\[
		\vec{\mathcal{H}}=-H\nu+P\vec{n}.
		\]
		Note that in this case, $\theta_\pm$ describe the infinitesimal area growth of $\Sigma$ in direction of the lightrays $\vec{l}_\pm=\nu\pm\vec{n}$ by the first variation of area, so MOTS/MITS are the critical points of the area functional along the lightrays $\vec{l}_\pm=\nu\pm\vec{n}$. Moreover  length of the mean curvature vector is given by
		\begin{align*}
		\mathfrak{g}(\vec{\mathcal{H}},\vec{\mathcal{H}})=\theta_+\theta_-=\mathcal{H}^2.
		\end{align*}
		In particular, we see that a MOTS/MITS is always a generalized apparent horizon. However, the converse statement is not true in general. See \cite{Carrasco-Mars} for a counterexample.

		In this work, we are interested in considering a family of initial data sets with a particular structure.
		
		\begin{defi}\label{def-classM}
		Let $\mathcal{M}$ be the set of initial data sets $(M,g,K)$ of the form $M=(r_0,r_1)\times\bS^{n-1}$ for some $0 \leq r_0< r_1 \leq \infty$ and
		\begin{align}
		g &= N(r,\cdot)^2 dr^2 + r^2 \s(r,\cdot), \label{eq-gDef} \\
		K &= k(r,\cdot) N(r,\cdot)^2 dr^2 + p(r,\cdot) r^2 \s(r,\cdot). \label{eq-Kdef} 
		\end{align}
		where  $N \defeq N(r,\cdot)$, $k \defeq k(r,\cdot)$ and $p \defeq p(r,\cdot)$ are smooth functions on $M$ ($r_0 < r < r_1$), and $\{ \s(r,\cdot) \}_{r_0 \leq r \leq r_1}$ is a smooth family of metrics on $\bS^{n-1}$ satisfying
		\begin{align}\label{eq_volumepreserving}
			\tr_{\s(r)}\s'(r)=0,
		\end{align}
		so that the volume form induced by $\s(r)$ is constant in $r$. 
		\end{defi}
		
		\begin{bem}
		Note that when $0 < r_0 < r_1 < \infty$ in Definition~\ref{def-classM}, we can include the boundaries. In this case we can consider $M=[r_0,r_1]\times\bS^{n-1}$ as a collar extension of the surface $\S_{r_0} = \{r_0 \} \times \bS^{n-1}$, and will usually assume that the functions $N$, $p$, $k$ are smooth on $\overline{M}$ ($r_0\leq r\leq r_1$).
		\end{bem}

		The type of manifolds in the family $\mathcal{M}$ have proven to be very useful in time symmetry $(K=0)$. Mantoulidis and Schoen \cite{M--S} used manifolds with this structure to successfully compute the Bartnik mass of \emph{minimal Bartnik data} $(\S \cong \bS^2,\s,H \equiv 0)$ for metrics such that the first eigenvalue of the operator $-\Lap+K(g)$ is strictly positive (see \cite{CM,CCGP,M-Li} for extensions to higher dimensions). They further yield explicit examples for the instability of the Penrose Inequality \cite{CC-Survey}.

		By performing a change of variables from the usual spherical coordinates to rectangular coordinates $\{ x^i \}$, we see that for a member of $\mathcal{M}$ we have
		\begin{align*}
		g_{ij} &= \delta_{ij} + (N(r,\cdot)^2-1)\theta_i \theta_j + r^2 \wt_{ij} \\
		K_{ij} &= p(r,\cdot)\delta_{ij} + (k(r,\cdot)N(r,\cdot)^2-p(r,\cdot))\theta_i\theta_j + p(r,\cdot)r^2 \wt_{ij}
		\end{align*}
		where $\theta_i = \frac{x^i}{r}$ and $ \wt$ denotes the pullback of $\tau(r,\cdot) \defeq \s(r,\cdot) - \s_{\bS^{n-1}}$. In particular, we can rephrase the desired asymptotics as assumptions on $N(r,\cdot)$, $k(r,\cdot)$, $p(r,\cdot)$ and $\tau(r,\cdot)$ to make the resulting manifold asymptotically flat. For simplicity we will often write $\s(r)$ instead of $\s(r,\cdot)$, and we will avoid to write the explicit dependences of $N$, $k$, and $p$ when it is notationally convenient. We will mainly work with a fixed set of coordinates on $\bS^{n-1}$ denoted by indices in capital letters which range from $1$ to $n-1$. Additionally, we identify the index 0 with the $r$-coordinate.
		
		Due to the above considerations and motivated by the applications we have in mind, we consider the subset $\mathcal{M}_{\AF}$ of $\mathcal{M}$ (with $r_1=\infty$) consisting of asymptotically flat initial data sets $(M,g,K)$ with the additional assumption that the family of metrics $\{ \s(r,\cdot) \}_{r_0 \leq r < \infty}$  satisfies
		\begin{align}\label{eq_exponentialconvergence}
			\s(r,\cdot) \to \s_{\bS^{n-1}}\text{ exponentially fast as }r \to \infty,
		\end{align}
		where $\s_{\bS^{n-1}}$ denotes the standard round metric on $\bS^{n-1}$.
		 More precisely, we say that $M\in\mathcal{M}$ with $r_1= \infty$  is \emph{asymptotically flat} or equivalently, $M\in\mathcal{M}_{\AF}$, if $\bmu$ and $\bJ$ are integrable and the following decay conditions are satisfied:		
		\begin{enumerate}
			\item[(i)] $r^{a}|N-1| + r^{a+1}|\pr_r N|+r^{a+1}|\pr_I N| + r^{a+2}|\pr_{IJ} N| \leq C$,
			\item[(ii)] $r^{b}|p| + r^{b+1}|\partial_r p|+ r^{b+1}|\partial_Ip| \leq C$,
			\item[(iii)] $r^{b}|k| + r^{b+1}|\partial_Ik| \leq C$, and
			\item[(iv)] $\tau_{IJ} \to 0$ exponentially fast as $r\to \infty$,
		\end{enumerate}
		for $a>\frac{n-2}{2}$,$b>\frac{n}{2}$, and $C>0$  constant, where $\tau_{IJ}$ denotes the local expression of the tensor $\tau \defeq \s(r,\cdot) - \s_{\bS^{n-1}}$.
		
		We remark that this notion of asymptotic flatness, i.e., $M\in\mathcal{M}_{AF}$, is not fully equivalent to the general notion of asymptotic flatness in Definition~\ref{defi_asymflat}, as we only impose decay on the first derivatives of $N$ in radial direction in (i), no decay on the radial derivative of $k$ in (iii), and assume that $\tau$ decays exponentially (iv). However, these assumptions are natural in the context of this work (see Lemma~\ref{lem_constraints} below). See also \cite{Bartnik-QS, Lin, Jang} for similar assumptions in time-symmetry ($K=0$), in particular utilizing the exponential decay to the round metric. Moreover, assumption (iv) can be naturally achieved by evolution of the surfaces along a geometric flow, e.g. Hamilton's modified Ricci flow \cite{Hamilton2} for $n=3$. It is also possible to modify a given geometric flow to achieve condition (iv) for $n \geq 3$ \cite{CM}. This condition has been assumed implicitly or explicitly by Lin \cite{Lin}, Jang \cite{Jang}, and Mantoulidis--Schoen in \cite{M--S} and subsequent works \cite{CC-Survey}. For $M\in\mathcal{M}_{AF}$, a direct computation shows that the ADM energy $E_{\ADM}$ and ADM momentum $P$ can be expressed as
			\begin{align}
			E_{\ADM} =&\,\frac{1}{2\w_{n-1}}\lm{r}{\infty} \int_{\bS^{n-1}}\(N^2-1\) r^{n-2}\, dV_{\bS^{n-1}}, \label{eq-E-ADM}\\ 
			\textbf{P}_i=&\frac{1}{\omega_{n-1}}\lm{r}{\infty}\int_{\bS^{n-1}} pN^2r^{n-1}\nu^{i}\, dV_{\bS^{n-1}}.\label{eq-P-ADM}
			\end{align}
		Note that $E_{\ADM}$ and $\textbf{P}$ are well-defined and finite also under the asymptotics assumed for $M\in\mathcal{M}_{AF}$, as we still impose the integrability of $\bmu$, $\bJ$.
		
		As part of our aim is to extend both the work of Lin \cite{Lin} in the asymptotically flat case and the work of Jang \cite{Jang} in the asymptotically hyperbolic case, we also briefly introduce a notion of asymptotic hyperbolicity for $\Lambda<0$. For simplicity, we will usually assume $\Lambda=-\frac{(n-1)(n-2)}{2}$. A precise definition of asymptotic hyperbolicity, in particular concerning definitions of energy, mass, and momentum, is far more subtle compared to the asymptotically case, cf. \cite{CH,Wang}. Due to the symmetry of the initial data sets in $\mathcal{M}$ under consideration, we give a simplified definition of asymptotic hyperbolicity that is sufficient for our purposes. Note that apart from the differences already remarked upon in the asymptotically flat case, this notion agrees with the classical definition of asymptotic hyperbolicity for a large class of physically relevant examples in spherical symmetry such as Schwarzschild Anti de\,Sitter. Here, we say that $M\in \mathcal{M}$ is \emph{asymptotically hyperbolic} if the initial data set $(\widetilde{M}, \widetilde{g},\widetilde{K})$ in $\mathcal{M}$ determined by the functions $\widetilde{N}:=N\sqrt{1+r^2}$, $\widetilde{k}:=k$, $\widetilde{p}:=p$ and the same familiy of metrics $\{\s(r)\}$ is asymptotically flat in the sense that $\widetilde{M}\in\mathcal{M}_{AF}$. In fact, under this assumptions we can extract the total mass as defined for an asymptotically hyperbolic time symmetric initial data set as the limit of the Hawking energy, see Section~\ref{subsec_hyperboloid}. We will further discuss a notion of asymptotically hyperboloidal initial data sets in the case of $\Lambda=0$ in Section~\ref{subsec_hyperboloid}.
		
		We close this section by showing that one can rewrite the constraint equations \eqref{eq-CC-ham}-\eqref{eq-CC-mom} as a PDE system for initial data sets $(M,g,K)$ within the family $\mathcal{M}$. In the time-symmetric case, i.e., $K\equiv 0$, and when $\s(r,\cdot)=\s_{\bS^2}$, the resulting single parabolic equation for $N$ has been first studied by Bartnik in \cite{Bartnik-QS}. Further treatment of this equation in the context of general relativity under different assumptions (typically fixing a family $\s(r,\cdot)$) has been done by Smith--Weinstein \cite{SW1,SW2}, Shi--Tam \cite{S-T}, Lin \cite{Lin} and Jang \cite{Jang}. When time symmetry is not assumed, the full PDE system in the case of general initial data sets has been studied by R\'acz \cite{Racz} recasting it as a hyperbolic-parabolic system for which he shows short-time existence of a solution. A numerical approach for the spherically symmetric and near Schwarzschild case has been developed by  by Csuk\'as and R\'acz in \cite{CR_num}. For further analysis of R\'acz' approach and modifications to it see the work by Beyer, Frauendiener and Ritchie \cite{Beyer} and references therein.
		\begin{lem}\label{lem_constraints}
	Let $(M,g,K)\in\mathcal{M}$ be an initial data set with energy density $\bmu$ and the momentum density $\bJ$. Then, the constraint equations \eqref{eq-CC-ham}-\eqref{eq-CC-mom} are equivalent to the following set of equations:
			\begin{align}
			\begin{split}\label{eq_hamiltionianPDE}
			\frac{2(n-1)}{r} \pr_r N=&\,\frac{2N^2}{r^2}\Lap_{\s(r)}N  - \frac{R(\s(r))}{r^2}N^3  + \frac{(n-1)(n-2)}{r^2}N + \frac{N}{4}|\s'(r)|^2_{\s(r)}  \\
			&\, +\( - 2(n-1)k(r,\cdot)p(r,\cdot) - (n-1)(n-2)p(r,\cdot)^2\)N^3 \\
			&\,+ \( 16\pi\bmu+ 2\Lambda\)N^3,
			\end{split}\\
			(n-1)\pr_r p(r,\cdot)&=(n-1)r^{-1}\left(k(r,\cdot)-p(r,\cdot)\right)  -8\pi \bJ_0, \label{eq_momentumPDE1}\\
			0 &=\left(k(r,\cdot)-p(r,\cdot)\right)N^{-1}\dds{N}{x^I}  -(n-2) \pr_Ip(r,\cdot)-\pr_I k(r,\cdot) -  8 \pi \bJ_I.\label{eq_momentumPDE2}
			\end{align}
		\end{lem}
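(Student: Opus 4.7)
The plan is to compute both sides of the constraint equations \eqref{eq-CC-ham}--\eqref{eq-CC-mom} by hand using the explicit structure of $g$ and $K$ in the family $\mathcal{M}$, and to read off the three PDEs from the resulting expressions.

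First I would treat the Hamiltonian constraint. Observing that each slice $\Sigma_r = \{r\}\times \bS^{n-1}$ carries the induced metric $h(r) = r^2\sigma(r)$ with unit normal $\nu = N^{-1}\partial_r$, I would compute the second fundamental form $A_{IJ} = (2N)^{-1}\partial_r h_{IJ} = r N^{-1}\sigma_{IJ} + \tfrac{r^2}{2N}\sigma'_{IJ}$. Using the volume-preserving hypothesis \eqref{eq_volumepreserving} to kill the trace terms $\tr_\sigma\sigma'$, one gets $H = (n-1)/(Nr)$ and $|A|^2 = (n-1)/(N^2r^2) + |\sigma'|^2_\sigma/(4N^2)$. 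Plugging these into the standard warped-product identity
\[
R(g) = R(h) - \frac{2}{N}\Delta_h N - \frac{2}{N}\partial_r H - H^2 - |A|^2,
\]
together with $R(h) = r^{-2}R(\sigma)$ and $\Delta_h N = r^{-2}\Delta_{\sigma(r)} N$ (since the $r^2$ conformal factor is $x$-independent) yields an explicit expression for $R(g)$. A direct calculation also gives $\tr_g K = k + (n-1)p$ and $|K|^2_g = k^2 + (n-1)p^2$, so $(\tr_g K)^2 - |K|^2_g = 2(n-1)kp + (n-1)(n-2)p^2$. Substituting into \eqref{eq-CC-ham}, multiplying through by $N^3$, and solving for $\partial_r N$ produces \eqref{eq_hamiltionianPDE}.

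Next I would address the momentum constraint. Setting $T_{ab} \defeq K_{ab} - (\tr_g K)g_{ab}$ and raising one index, one computes $T^0_0 = -(n-1)p$, $T^I_J = -(k+(n-2)p)\delta^I_J$, and $T^0_I = T^I_0 = 0$. For the radial component, I would expand
\[
(\dv_g T)_0 = \frac{1}{\sqrt{|g|}}\partial_a(\sqrt{|g|}\,T^a{}_0) - \Gamma^c_{a0}T^a{}_c,
\]
using $\sqrt{|g|} = N r^{n-1}\sqrt{|\sigma|}$ with $\partial_r\sqrt{|\sigma|} = 0$ by the volume-preserving condition. After the $\partial_r N$ contributions cancel between the divergence piece and $\Gamma^0_{00}T^0{}_0$, and the angular Christoffel $\Gamma^J_{I0}T^I{}_J$ simplifies via $\tr_\sigma\sigma' = 0$, I arrive at \eqref{eq_momentumPDE1}.

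For the angular component $(\dv_g T)_I$, the analogous expansion produces terms involving the Christoffel symbols of $\sigma(r)$ and derivatives of $\sqrt{|\sigma|}$ in directions tangent to $\bS^{n-1}$; the key cancellation is that $\Gamma^K_{JI,\sigma}$ appearing via $\partial_J\sqrt{|g|}/\sqrt{|g|}$ and via the $\Gamma^K_{JI}T^J{}_K$ term coincide by symmetry and cancel exactly, leaving only the $\partial_I N$ and $\partial_I k, \partial_I p$ contributions assembled in \eqref{eq_momentumPDE2}. The only real source of difficulty is bookkeeping---keeping the sign conventions for $A$ and $H$ consistent through Gauss--Codazzi, and systematically exploiting $\tr_\sigma\sigma' = 0$ at every place where a trace of $\sigma'$ would otherwise appear; no conceptual step is involved beyond this.
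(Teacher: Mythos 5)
Your proposal is correct and follows essentially the same route as the paper: a direct coordinate computation of $\tr_g K$, $|K|^2_g$, the radial and angular components of $\dv_g(K-(\tr_g K)g)$, and the scalar curvature $R(g)$ — the last of which the paper simply quotes from the literature, whereas you rederive it from the second fundamental form of the slices; your identity for $R(g)$, the values $H=(n-1)/(Nr)$, $|A|^2=(n-1)/(N^2r^2)+|\s'|^2_{\s}/(4N^2)$, and the components $T^0{}_0=-(n-1)p$, $T^I{}_J=-(k+(n-2)p)\delta^I_J$ all check out and reproduce the paper's intermediate formulas. One caveat: carrying out your angular computation carefully yields $(p-k)N^{-1}\partial_I N$ as the first term, which agrees with the paper's own intermediate expression for $\dv_g(K)_I$ but has the opposite sign to the $(k-p)N^{-1}\partial_I N$ printed in \eqref{eq_momentumPDE2} — this is an internal sign typo in the lemma's display rather than a gap in your argument.
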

		\begin{proof}
			Direct computations show that
			\begin{align*}
			\tr_g K &=k(r,\cdot) + p(r,\cdot)(n-1),
			\end{align*}
			and hence,
			\begin{equation}
			\dv_{g}(K - ((\tr_{g} K) g) )_0 =
			(n-1)r^{-1}k(r,\cdot) -(n-1)r^{-1}p(r,\cdot) -(n-1)\pr_r p(r,\cdot),
			\end{equation}
			and
			\begin{align*}
			\dv_g(K)_I &=p(r,\cdot) N^{-1}\dds{N}{x^I} - k(r,\cdot) N^{-1}\dds{N}{x^I}+ \pr_Ip(r,\cdot).
			\end{align*}
			Thus the momentum constraint \eqref{eq-CC-mom} reduces to
			\begin{align*}
			(n-1)r^{-1}k(r,\cdot) -(n-1)r^{-1}p(r,\cdot) -(n-1)\pr_r p(r,\cdot) &= 8\pi \bJ_0, \\
			p(r,\cdot)N^{-1}\dds{N}{x^I} - k(r,\cdot)N^{-1}\dds{N}{x^I} -(n-2) \pr_Ip(r,\cdot)-\pr_I k(r,\cdot)  &= 8 \pi \bJ_I.
			\end{align*}
			For the Hamiltonian constraint \eqref{eq-CC-ham} we have:
			\begin{align*}
			(\tr_g K)^2
			&= k(r,\cdot)^2 + 2(n-1)k(r,\cdot)p(r,\cdot) + (n-1)^2p(r,\cdot)^2,
			\end{align*}
			and
			\begin{align*}
			|K|_{g}^2 &=k(r,\cdot)^2 + (n-1)p(r,\cdot)^2.
			\end{align*}
			The scalar curvature is given by (see, for example \cite{CC-Survey})
			\begin{align*}
			R(g)=&\,\frac{2}{r^2N}\( -\Lap_{\s(r)}N  +\frac{R(\s(t))}{2}N  \) \\
			&\,+\frac{1}{N^2}\( 2(n-1) \frac{1}{r} \frac{\pr_r N}{N} - \frac{(n-1)(n-2)}{r^2} - \frac{1}{4}|\s'(r)|^2_{\s(r)}   \).
			\end{align*}
			
			Thus the Hamiltonian constraint \eqref{eq-CC-ham} is then given by
			\begin{align*}
			0=&\,R(g) + (\tr_{g} K)^2 - |K|^2_{\g} -16\pi\bmu + 2\Lambda \\
			=&\,-\frac{2}{r^2 N}\Lap_{\s(r)}N  + \frac{R(\s(t))}{r^2} + \frac{1}{N^3}\frac{2(n-1)}{r} \pr_r N - \frac{(n-1)(n-2)}{N^2 r^2} - \frac{1}{4N^2}|\s'(r)|^2_{\s(r)}  \\
			&\,+ 2(n-1)kp + (n-1)^2p^2  - (n-1)p^2-16\pi\bmu -2\Lambda.
			\end{align*}
		\end{proof}

	\section{Penrose-like energy inequalities}\label{sec_penrose}
		In the setting of a $3$-dimensional asymptotically flat initial data set $(M,g,K)$, the Penrose conjecture states that if $(M,g,K)$ satisfies the (DEC) and is geodesically complete up to an inner boundary given as an outermost apparent horizon $\Sigma_{H}$, then
		\begin{align}\label{eq_Penrose}
		m_{\ADM} \geq \sqrt{\frac{\btr{\Sigma_{{H}}}}{16\pi}},
		\end{align}
		and equality should hold if and only if $(M,g,K)$ embeds isometrically into the Schwarzschild spacetime of appropriate mass. The boundary is assumed to be outermost in the sense that there are no other apparant horizons in $(M,g,K)$ that enclose the inner boundary $\Sigma_{H}$. It has been proven in time symmetry ($K\equiv0$) by Huisken--Ilmanen \cite{H-I} when the horizon is a connected minimal surface and by Bray for multiple components \cite{Bray}. In spherical symmetry (for $n \geq 3$) there has been several approaches as described in a survey by Mars \cite{Mars} (see also \cite{Lee-GRbook}), where the inner boundary is given by a marginally inner/outer trapped surface as a widely agreed upon notion of apparent horizon, and outermost in the sense that in either case there are no MOTS or MITS enclosing the boundary. In fact in spherical symmetry, one merely requires that the horizon is given by a generalized apparent horizon and is outermost in the sense that no leave of the canonical foliation by round spheres is a generalized apparent horizon. The two notions of outermost horizons are indeed equivalent in spherical symmetry, see Remark~\ref{rmk-untrapped}. However, we would like to point out that a Penrose Inequality for generalized apparent horizons seems to be false in general, cf. \cite{Carrasco-Mars}. Nonetheless, for initial data sets $(M,g,K)\in\mathcal{M}_{AF}$, we are able to establish Penrose-like energy inequalities with rigidity, by proving the monotonicity of the Hawking energy, where the ADM mass is replaced by the ADM energy and the boundary is given by a generalized apparent horizon. The fact that the statement remains true for generalized apparent horizons seems to be an artifact of the spherically symmetric case which sits naturally in the family $\mathcal{M}$. As a special case, we recover the full Penrose Inequality in the case of spherically symmetric and totally umbilic initial data sets, and propose a refinement of the inequality in the general case that could potentially lead to a full Penrose Inequality in this class of initial data sets.
		
		Let $(\S,\g)$ be an orientable, compact hypersurface in an initial data set $(M^n,g,K)$. When $n=3$, the Hawking energy of $\S$ is defined as
		\begin{align}\label{eq_hawkingmass}
		m_H(\S) \defeq  \sqrt{\frac{|\S|_{\g}}{16 \pi}}\( 1 - \frac{1}{16\pi}\int_{\S} \mathcal{H}^2 \, dV_{\g}     \),
		\end{align} 
		where $dV_{\g}$ is the volume form of $\g$. Note that for $(\S_r \defeq \{r \} \times \bS^{2},r^2 \s(r))$ in $(M,g,K)\in \mathcal{M}_{AF}$ we have
		\begin{align*}
		\theta_{\pm}  = \frac{2}{rN(r,\cdot)}\pm 2p(r,\cdot).
		\end{align*}
		Since $|\S_r| = 4\pi r^{2} $ (by (ii)), the Hawking energy of $\S_r$ is
		\begin{equation*}
		m_H(\S_r) = \frac{1}{8 \pi} \int_{\bS^2} r\( 1 + r^2 p^2 - \frac{1}{N^2}  \)\, dA_{\s(r)}.
		\end{equation*}
		
		\begin{defi} \label{def-untrapped}
			Let $(M,g,K)$ be an initial data set and let $\{ S_r \}_{r \in I}$ be a family of orientable, closed hypersurfaces in $M$ for some intervall $I\subseteq \R$. We say that the family $\{ S_r \}_{r \in I}$ satisfies the untrapped condition if
			\begin{equation*}
			0 \leq \mathcal{H}^2 = H^2 - P^2 
			\end{equation*}
			for all $S_r$.
			When the inequality holds strictly, we say that it satisfies the strictly untrapped condition.
		\end{defi}
			\begin{bem} \label{rmk-untrapped}
				If the initial data set isometrically embeds into an ambient spacetime, then the strictly untrapped  condition agrees with the physical notion of untrapped surfaces in general relativity. Note that if $(M,g,K)$ admits a foliation satisfying the strictly untrapped condition up to the inner boundary, then $(M,g,K)$ does not contain any closed generalized apparent horizons (enclosing the boundary), since there has to be at least one point along the surface where any such surface $\Sigma$ touches a leave $S_{r}$ tangentially (locally) from the inside, implying $H>P$ at this point. In particular, if the boundary is a generalized apparent it is outermost. In spherical symmetry, the converse is also true by the intermediate value theorem.
		\end{bem}
		For $(M,g,K) \in \mathcal{M}_{\AF}$, the foliation of canonical leaves $\{ \S_r \}_{r \in (r_0,\infty)}$ with $\Sigma_r=\{r\}\times \Sbb^2$ satisfies the strictly untrapped condition iff
		\begin{align}\label{eq_strictlyuntrapped}
		\frac{4}{r^2}\(\frac{1}{N(r,\cdot)^2} - r^2 p(r,\cdot)  \) >0\text{ for all }r>r_0.
		\end{align}
		We recall the well-known fact that the limit of the Hawking energy of $\S_r$ for $r \to \infty$ is the ADM energy, as stated in the following lemma for the readers convenience.
		\begin{lem} \label{lemma-HtoE-ADM}
			Let $(M=[r_0,\infty)\times \bS^2,g,K) \in \mathcal{M}_{\AF}$ and $\S_r=\{r\} \times \bS^2$. Then,
			\begin{equation*}
			\lm{r}{\infty} m_H(\S_r) = E_{\ADM}.
			\end{equation*}
		\end{lem}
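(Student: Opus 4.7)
The plan is to directly compare the explicit expression for $m_H(\S_r)$ derived just above the lemma with the specialization of \eqref{eq-E-ADM} to $n=3$, which reads
\[
E_{\ADM} = \frac{1}{8\pi}\lm{r}{\infty}\int_{\bS^2}(N^2-1)r\, dV_{\bS^2}.
\]
A preliminary step is to replace $dA_{\s(r)}$ by $dV_{\bS^2}$ inside the Hawking energy integral. The volume-preserving condition \eqref{eq_volumepreserving} gives $\pr_r dA_{\s(r)} \equiv 0$, so $dA_{\s(r)}$ is independent of $r$; combined with the exponential convergence \eqref{eq_exponentialconvergence} of $\s(r)$ to $\s_{\bS^2}$, this forces $dA_{\s(r)} = dV_{\bS^2}$ for every $r$.

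Next, I would split the remaining integrand $r(1-1/N^2) + r^3 p^2$ and handle each piece via the asymptotic conditions built into the definition of $\mathcal{M}_{\AF}$. Writing $r(1-1/N^2) = r(N^2-1)/N^2$ and using condition (i) (so $|N-1|\leq Cr^{-a}$ with $a>\tfrac{1}{2}$), we have $1/N^2 = 1 + O(r^{-a})$ and $r(N^2-1) = O(r^{1-a})$, hence
\[
r\(1 - \tfrac{1}{N^2}\) = r(N^2-1) + O(r^{1-2a}).
\]
Integrating over $\bS^2$ and using $1-2a<0$ shows that this piece of $m_H(\S_r)$ and $\int_{\bS^2}(N^2-1)r\, dV_{\bS^2}$ share the same limit. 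For the remaining piece, condition (ii) yields $|p|\leq Cr^{-b}$ with $b>\tfrac{3}{2}$, so $r^3 p^2 = O(r^{3-2b})$ pointwise; since $3-2b<0$, its integral over $\bS^2$ vanishes in the limit.

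Combining these two estimates proves $\lm{r}{\infty} m_H(\S_r) = E_{\ADM}$. I do not foresee a genuine obstacle: the argument is essentially a bookkeeping exercise in the decay rates prescribed for $\mathcal{M}_{\AF}$. The only step that is not purely mechanical is identifying the volume form $dA_{\s(r)}$ with $dV_{\bS^2}$, which is where the combination of \eqref{eq_volumepreserving} and \eqref{eq_exponentialconvergence} plays a substantive role; everything else reduces to elementary Taylor expansion of $1/N^2$ around $N=1$ together with the uniform pointwise decay of $N-1$ and $p$.
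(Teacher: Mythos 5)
Your proof is correct and follows essentially the same route as the paper's: split the Hawking energy integrand as $r(N^2-1)/N^2 + r^3p^2$, then use decay condition (i) to identify the first limit with $E_{\ADM}$ and condition (ii) to kill the second term. You are slightly more careful than the paper in justifying the identification $dA_{\s(r)}=dV_{\bS^2}$ via \eqref{eq_volumepreserving} and \eqref{eq_exponentialconvergence}, but this is a refinement of the same argument, not a different one.
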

		\begin{proof}
			Recall that by definition (see \eqref{eq-E-ADM})
			\begin{equation*}
			E_{\ADM} =\frac{1}{8 \pi}\lm{s}{\infty} \int_{\bS^2} r\(N^2-1\)  dV_{\bS^2}.
			\end{equation*}
			On the other hand,
			\begin{align*}
				\lm{r}{\infty} m_H(\S_r) &= \lm{r}{\infty}  \left[ \frac{1}{8 \pi} \int_{\bS^2} r\( 1 + r^2 p^2 - \frac{1}{N^2}  \)\, dA_{\s(r)}  \right] \\
				&= \frac{1}{8 \pi} \lm{r}{\infty}  \int_{\bS^2} r\(  \frac{N^2 - 1}{N^2}  \)\, dA_{\s(r)} +  \frac{1}{8 \pi}\lm{r}{\infty}  \int_{\bS^2}  r^3 p^2  \, dA_{\s(r)}
			\end{align*}
			and the result follows by recalling the decay conditions (1)-(3), which imply that $N \to 1$ and the second term disappears as $r \to \infty$.
		\end{proof}

		When $n=3$, we are able to guarantee the monotonicity of the Hawking energy along $\{ \S_r \}$ in $(M,g,K) \in \mathcal{M}_{\AF}$ when the dominant energy condition holds, which in particular leads to a \emph{Penrose-type energy inequality}.
		
		\begin{lem} \label{lemma-monH}
			Let $(M=[r_0,\infty)\times \bS^2,g,K) \in \mathcal{M}_{\AF}$ and $\{ \S_r \}_{r \in [r_0,\infty)} \subset M$ be the foliation of canonical leaves as above. Suppose that the inner boundary, $\S_{r_0}$, is a generalized apparent horizon and that the family $\{ \S_r \}_{r \in (r_0,\infty)}$ satifies the strictly untrapped condition. If the dominant energy condition \eqref{eq_DEC} holds, then 
			\[
				\textstyle{\ddt{}{r}m_H(\S_r) \geq 0}.
			\] 
			Moreover, the following Penrose-like energy inequality holds:
			\begin{equation} \label{eq-PTineq}
			E_{\ADM} \geq  \sqrt{\frac{|\S_{r_0}|}{16 \pi}}.
			\end{equation}
		\end{lem}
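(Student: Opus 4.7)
The plan is to verify the claim by direct computation of $\frac{d}{dr} m_H(\Sigma_r)$, exploiting the explicit formula derived in the excerpt. Since $\{\sigma(r)\}$ is volume-preserving, $|\Sigma_r| = 4\pi r^2$ and $dA_{\sigma(r)}$ is $r$-independent, so differentiating under the integral gives
\begin{align*}
\frac{dm_H}{dr} = \frac{1}{8\pi}\int_{\bS^2}\left[\,1 + 3r^2 p^2 - \frac{1}{N^2} + 2r^3 p\,\pr_r p + \frac{2r\,\pr_r N}{N^3}\,\right]dA_{\sigma(r)}.
\end{align*}
The key step is to eliminate $\pr_r N$ and $\pr_r p$ using the constraint equations \eqref{eq_hamiltionianPDE}--\eqref{eq_momentumPDE1} specialised to $n=3$, $\Lambda=0$. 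Substituting $\frac{2r\pr_r N}{N^3}$ from the Hamiltonian constraint and $2r^3 p\,\pr_r p$ from the radial momentum constraint, the $r^2p^2$ and $r^2 pk$ terms cancel, and the remaining algebra produces
\begin{align*}
1 + 3r^2p^2 - \frac{1}{N^2} + 2r^3p\,\pr_r p + \frac{2r\pr_r N}{N^3}
= 1 + \frac{\Lap_{\sigma(r)} N}{N} - \frac{R(\sigma(r))}{2} + \frac{r^2}{8N^2}|\sigma'(r)|_{\sigma(r)}^2 + 8\pi r^2\bigl(\bmu - rp\,\bJ_0\bigr).
\end{align*}

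Next I would simplify by integration over $\bS^2$. Integration by parts converts $\int \Lap_\sigma N/N\,dA_\sigma$ into $\int |\nabla_\sigma \log N|^2\,dA_\sigma$, and Gauss--Bonnet together with $|\bS^2|_{\sigma(r)} = 4\pi$ (volume preservation) yield $\int(1 - R(\sigma)/2)\,dA_\sigma = 0$. Thus
\begin{align*}
\frac{dm_H}{dr} = \frac{1}{8\pi}\int_{\bS^2}\left[\,|\nabla_\sigma \log N|^2 + \frac{r^2}{8N^2}|\sigma'(r)|_{\sigma(r)}^2 + 8\pi r^2(\bmu - rp\,\bJ_0)\,\right] dA_{\sigma(r)}.
\end{align*}
The first two terms are manifestly nonnegative. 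For the matter term, note that $\bJ_0$ is the $\pr_r$-component of $\bJ$, hence $|\bJ|_g \geq |\bJ_0|/N$; combined with DEC this gives $|\bJ_0| \leq N\bmu$. The strictly untrapped condition \eqref{eq_strictlyuntrapped} is exactly $rN|p|<1$, and therefore
\begin{align*}
\bmu - rp\,\bJ_0 \;\geq\; \bmu - r|p|\,|\bJ_0| \;\geq\; \bmu(1 - rN|p|) \;\geq\; 0.
\end{align*}
This is the subtle step, and the main thing to get right: it is the interplay between DEC and the untrapped hypothesis that makes the radial component of $\bJ$ controllable by $\bmu$ through the lapse. Monotonicity $\frac{dm_H}{dr}\geq 0$ follows.

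Finally, I would derive the Penrose-like inequality \eqref{eq-PTineq} by combining three facts: (i) monotonicity just established, (ii) Lemma~\ref{lemma-HtoE-ADM} giving $\lim_{r\to\infty} m_H(\Sigma_r) = E_{\ADM}$, and (iii) the boundary being a generalized apparent horizon means $\mathcal{H}\equiv 0$ on $\Sigma_{r_0}$, so $m_H(\Sigma_{r_0}) = \sqrt{|\Sigma_{r_0}|/16\pi}$. Chaining these yields $E_{\ADM} \geq m_H(\Sigma_{r_0}) = \sqrt{|\Sigma_{r_0}|/16\pi}$. The principal obstacle is the cancellation bookkeeping when inserting the constraint equations into the derivative: one must verify that precisely the ``wrong-sign'' terms ($-1/N^2$, $3r^2p^2$) combine with constraint-generated terms to leave only nonnegative contributions plus the matter quantity $\bmu - rp\,\bJ_0$.
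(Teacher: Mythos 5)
Your proposal is correct and follows essentially the same route as the paper: differentiate the Hawking energy, substitute the Hamiltonian and radial momentum constraints, integrate by parts and apply Gauss--Bonnet with volume preservation, then combine the strictly untrapped condition with the DEC to control $\bmu - rp\,\bJ_0$ pointwise. The only cosmetic difference is the order of the final inequality chain (you bound $|\bJ_0|\le N\bmu$ and factor out $\bmu$, while the paper bounds $r|p|<N^{-1}$ and then uses $N^{-1}|\bJ_0|\le|\bJ|_g$), which is equivalent.
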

		
		\begin{proof}
			A direct computation yields
			\begin{align*}
			8\pi \textstyle{\ddt{}{r}} m_H(\S_r)&= \int_{\bS^2} \( 1+r^2p^2-\frac{1}{N^2}\)\,dA_{\s(r)}  \\
			&\qquad + \int_{\bS^2} r\left[2rp^2+2r^2p\pr_r p + \frac{2}{N^3}\pr_r N   \right] \, dA_{\s(r)} \\
			&=\int_{\bS^2}\( 1-K(\s(r))  \) \,dA_{\s(r)} +\int_{\bS^2}  \frac{1}{N^2} \la \nabla^{\s(r)} N,\nabla^{\s(r)} N \ra\,dA_{\s(r)} \\
			&\qquad + \int_{\bS^2}   \frac{r^2}{8N^2}|\s'(r)|^2_{\s(r)}  \,dA_{\s(r)} +\int_{\bS^2} 8\pi r^2\( \bmu - rp \bJ_0 \)\,dA_{s(r)} \\
			&\geq \int_{\bS^2} 8\pi r^2\( \bmu - rp \bJ_0 \)\,dA_{\s(r)},
			\end{align*}
			where $K(\sigma(r))$ denotes the Gauß curvature of $\sigma(r)$, and we used Lemma~\ref{lem_constraints} \eqref{eq_hamiltionianPDE} and \eqref{eq_momentumPDE1} for $n=3$ and $\Lambda=0$ in the second to last line and the Gauß--Bonnet Theorem in the last line.
			Recall that from the strictly untrapped condition \eqref{eq_strictlyuntrapped} we have $N^{-1}>r|p|$, hence
			\begin{align*}
			8\pi \textstyle{\ddt{}{r}} m_H(r,\cdot)& \geq  \int_{\bS^2} 8\pi r^2\( \bmu - N^{-1} |\bJ_0| \)\,dA_{\s(r)} \\
			&\geq  \int_{\bS^2} 8\pi r^2\( \bmu - |\bJ|_{g} \)\,dA_{\s(r)},
			\end{align*}
			where we have used $|\bJ|^2_g = N^{-2}\bJ_0^2 + r^{-2}|\bJ^T|_{\s(r)}^2$. Hence, the monotonicity follows directly from the (DEC).
			
			The Penrose-like inequality \eqref{eq-PTineq} follows readily by noting that $m_H(\S_{r_0}) =  \sqrt{\frac{|\S_{r_0}|}{16 \pi}}$ and the fact that $m_H(\S_r) \to E_{\ADM}$ as $r \to \infty$ (Lemma \ref{lemma-HtoE-ADM}).
		\end{proof}
	
		Note that the dimensional restriction to $n=3$ is only necessary to apply the Gauß-Bonnet Theorem. The monotonicity of the Hawking energy and a subsequent ADM energy estimate (with rigidity as below) can be readily established for higher dimensions in spherical symmetry. For general initial data sets in $\mathcal{M}_{AF}$ this can be done by assuming that the given familiy $\sigma(s)$ satisfies an additional integral estimate for the scalar curvature $R(\sigma(r))$ as in~\cite{CCGP}.
		
		Before discussing the rigidity, we note that it is possible to obtain refined estimates for $E_{\ADM}$ for within the family $\mathcal{M}_{\AF}$. We consider the quantity $|\bJ|_g - N^{-1}|\bJ_0|$, which can be thought of as a measure of the size of the angular contribution in $|\bJ|_g$.
		
		\begin{lem}\label{lemma-int-eta}
			Let $(M=[r_0,\infty)\times \bS^2,g,K) \in \mathcal{M}_{\AF}$ such that the dominant energy condition \eqref{eq_DEC} holds. Then 
			\begin{equation*}
			\int_{r_0}^{\infty} \int_{\bS^2} r^2 (|\bJ|_g - N^{-1}|\bJ_0|)  \, dA_{\s(r)}dr
			\end{equation*} 
			is finite.
		\end{lem}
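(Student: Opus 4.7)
The plan is to establish the claim by a direct pointwise comparison, reducing the integral to the integrability of $\bJ$ already built into the definition of $\mathcal{M}_{\AF}$. The DEC is not essential for the estimate itself, but is included because $|\bJ|_g - N^{-1}|\bJ_0|$ is the natural candidate for the non-negative ``angular'' density $f$ appearing in the refined Penrose-type inequality once combined with the monotonicity of the Hawking energy from Lemma~\ref{lemma-monH}.

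First I would unpack $|\bJ|_g$ using the block-diagonal form of $g$. Since $g = N^2 dr^2 + r^2 \s(r)$ gives $g^{00}=N^{-2}$ and $g^{IJ}=r^{-2}\s(r)^{IJ}$, one obtains
\[ |\bJ|_g^2 = N^{-2}\bJ_0^2 + r^{-2}|\bJ^T|_{\s(r)}^2, \]
where $\bJ^T$ denotes the tangential part of $\bJ$ along the leaves $\S_r$ (this decomposition already appears within the proof of Lemma~\ref{lemma-monH}). Dropping the non-negative angular term immediately yields the pointwise comparison $0 \leq |\bJ|_g - N^{-1}|\bJ_0| \leq |\bJ|_g$.

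Second, I would record a uniform positive lower bound for $N$ on $M$. The decay condition (i) gives $|N-1|\leq C r^{-a}$ with $a>\tfrac{n-2}{2}>0$, forcing $N\geq\tfrac{1}{2}$ for $r$ large enough; combined with smoothness and pointwise positivity of $N$ on the complementary compact piece of $M$, this produces a constant $c>0$ such that $N\geq c$ on all of $M$.

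Finally, since the Riemannian volume form decomposes as $dV_g = N r^{2} \, dr \, dA_{\s(r)}$ (for $n=3$), the two previous steps combine to give
\[ \int_{r_0}^\infty\!\!\int_{\bS^2} r^2\bigl(|\bJ|_g - N^{-1}|\bJ_0|\bigr)\, dA_{\s(r)}\, dr \leq \int_{r_0}^\infty\!\!\int_{\bS^2} r^2 |\bJ|_g\, dA_{\s(r)}\, dr \leq \frac{1}{c}\int_M |\bJ|_g \, dV_g, \]
which is finite by the integrability of $\bJ$ assumed in the definition of $\mathcal{M}_{\AF}$. There is no genuine analytic obstacle; the content of the lemma is the dimensional bound $|\bJ|_g - N^{-1}|\bJ_0|\leq |\bJ|_g$, which reduces the finiteness claim to a property that has already been postulated. (If one prefers, DEC can be invoked to replace $|\bJ|_g$ by $\bmu$ in the penultimate step and appeal to integrability of $\bmu$ instead, but this is not needed.)
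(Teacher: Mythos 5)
Your argument is correct and follows essentially the same route as the paper: the pointwise bound $N^{-1}|\bJ_0|\le|\bJ|_g$, followed by comparison with $\int_M|\bJ|_g\,dV_g$ via the volume form $dV_g=Nr^2\,dr\,dA_{\s(r)}$ and the decay of $N$ (the paper splits the integral at a large radius $L$ where $2N\ge 1$ rather than extracting a global lower bound $N\ge c$, but this is the same idea). Your observation that the DEC is not actually used is also consistent with the paper's proof.
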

		
		\begin{proof}
			Since $N^{-1}|\bJ_0| \leq  |\bJ|_g$, we only need to check the integrability of $r^2|\bJ|_g$. Recall that $N \to 1$ as $r\to \infty$. Let $L\ge r_0$ sufficiently big such that $2N\ge1$ for $r \geq L$. We have
			\begin{align*}
			&\int_{r_0}^{\infty} \int_{\bS^2} r^2 |\bJ|_g  \, dA_{\s(r)}\,dr \\
			=&\int_{r_0}^{L} \int_{\bS^2} r^2 |\bJ|_g  \, dA_{\s(r)}\,dr+\int_{L}^{\infty} \int_{\bS^2} r^2 |\bJ|_g  \, dA_{\s(r)}\,dr \\
			\leq&\int_{r_0}^{L} \int_{\bS^2} r^2 |\bJ|_g  \, dA_{\s(r)}\,dr+2\int_{L}^{\infty} \int_{\bS^2} r^2 N|\bJ|_g  \, dA_{\s(r)}\,dr\\
			\leq&\int_{r_0}^{L} \int_{\bS^2} r^2 |\bJ|_g  \, dA_{\s(r)}\,dr+2\int_M|\bJ|_g\d\operatorname{vol}_M,
			\end{align*}
			which is finite by the integrability of $|\bJ|_g$ on $M$.
		\end{proof}
		
		Motivated by the computation of the monotonicity of the Hawking energy in the proof of Lemma \ref{lemma-int-eta},  we define
		\begin{align}\label{eq_ModifiedHawkingmass}
		\mathfrak{M}_f(r) \defeq m_H(\S_r) - \int_{r_0}^r \int_{\bS^2} s^2 f(s,\cdot) \, dA_{\s(s)}  ds,
		\end{align}
		for any non-negative function $f$ on $M$ such that $f \leq |\bJ|_g -N^{-1} |\bJ_0|$. We purposely leave some freedom to choose $f$ in the definition of $\mathfrak{M}_f$, as we will see below that there are several possible non-trivial choices that allow for a full rigidity statement (see Remark~\ref{rmk-fchoices} below), and there might be some particular geometric choices of interest. Most appealing would be a choice of $f$ in divergence from, giving rise to a notion of quasilocal momentum. Note that $\bJ_I=0$ for all $I$ in spherical symmetry, so $f\equiv 0$ and $\mathfrak{M}_f$ reduces to the Hawking energy in spherical symmetry. The next Lemma establishes a monotonicity for $\mathfrak{M}_f$ in the familiy $\mathcal{M}_{\AF}$ which in particular yields a refined energy estimate:
		
		\begin{lem} \label{lemma-PI-f}
			Let $(M=[r_0,\infty)\times \bS^2,g,K) \in \mathcal{M}_{\AF}$ and $\{ \S_r \}_{r \in [r_0,\infty)} \subset M$ be the foliation of canonical leaves as above. Suppose that the inner boundary, $\S_{r_0}$, is a generalized apparent horizon and that the family $\{ \S_r \}_{r \in (r_0,\infty)}$ satisfies the strictly untrapped condition. Suppose that the dominant energy condition \eqref{eq_DEC} holds. If $0\le f \leq |\bJ|_g -N^{-1} |\bJ_0|$, then the quantity
			\begin{equation*}
			\mathfrak{M}_f(r)=m_H(\S_r) -  \int_{r_0}^r \int_{\bS^2} s^2 f(s,\cdot) \, dA_{\s(s)}ds
			\end{equation*}
			is monotone (with respect to $r$) with $\mathfrak{M}_f(r_0) = \sqrt{\frac{|\S_{r_0}|}{16 \pi}}$. Moreover, the following Penrose-like energy inequality holds:
			\begin{equation*}
			E_{\ADM} \geq \sqrt{\frac{|\S_{r_0}|}{16 \pi}}+\int_{r_0}^{\infty} \int_{\bS^2} r^2 f(r,\cdot) \, dA_{\s(r)}dr.
			\end{equation*}
		\end{lem}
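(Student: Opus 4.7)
The plan is to adapt the computation from Lemma~\ref{lemma-monH}, but to retain the sharper intermediate bound that was discarded there (namely, to stop before replacing $N^{-1}|\bJ_0|$ by the larger $|\bJ|_g$). Everything rests on a single differentiation under the integral sign plus the observation that the boundary term coming from the horizon is exactly $\sqrt{|\S_{r_0}|/16\pi}$.

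First, I would compute $\frac{d}{dr}\mathfrak{M}_f(r)$. Differentiating \eqref{eq_ModifiedHawkingmass} in $r$ and re-using verbatim the calculation from the proof of Lemma~\ref{lemma-monH} up to the Gauß--Bonnet step, I obtain
\begin{align*}
8\pi \ddt{}{r}\mathfrak{M}_f(r)
&=\int_{\bS^2}\bigl(1-K(\s(r))\bigr)\d A_{\s(r)} + \int_{\bS^2}\tfrac{1}{N^2}\la\nabla^{\s(r)}N,\nabla^{\s(r)}N\ra \d A_{\s(r)}\\
&\qquad + \int_{\bS^2}\tfrac{r^2}{8N^2}|\s'(r)|^2_{\s(r)}\d A_{\s(r)} + \int_{\bS^2} 8\pi r^2(\bmu - rp\bJ_0)\d A_{\s(r)}\\
&\qquad - 8\pi \int_{\bS^2} r^2 f(r,\cdot)\d A_{\s(r)}.
\end{align*}
The Gauß--Bonnet Theorem kills the first term up to a nonnegative contribution from the gradient and $|\s'|^2$ terms, which I simply discard. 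The strictly untrapped condition \eqref{eq_strictlyuntrapped} then gives $|rp|\le N^{-1}$, so that $|rp\bJ_0|\le N^{-1}|\bJ_0|$, leaving
\[
\ddt{}{r}\mathfrak{M}_f(r)\ \ge\ \int_{\bS^2} r^2\bigl(\bmu - N^{-1}|\bJ_0| - f\bigr)\d A_{\s(r)}.
\]

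The key step is now the hypothesis $f\le |\bJ|_g-N^{-1}|\bJ_0|$ combined with the dominant energy condition $\bmu\ge |\bJ|_g$, which together yield
\[
\bmu - N^{-1}|\bJ_0| - f\ \ge\ |\bJ|_g - N^{-1}|\bJ_0| - f\ \ge\ 0.
\]
This is exactly where this refined statement improves on Lemma~\ref{lemma-monH}: instead of throwing away $|\bJ|_g - N^{-1}|\bJ_0|$ immediately, we have paid ourselves back via the subtracted term in $\mathfrak{M}_f$. Hence $\mathfrak{M}_f$ is nondecreasing in $r$.

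For the boundary value, note that since $\S_{r_0}$ is a generalized apparent horizon we have $\mathcal{H}\equiv 0$ on $\S_{r_0}$, so $m_H(\S_{r_0})=\sqrt{|\S_{r_0}|/16\pi}$, and the integral in \eqref{eq_ModifiedHawkingmass} vanishes at $r=r_0$; thus $\mathfrak{M}_f(r_0)=\sqrt{|\S_{r_0}|/16\pi}$. Finally, Lemma~\ref{lemma-int-eta} guarantees that $\int_{r_0}^\infty\int_{\bS^2} r^2(|\bJ|_g - N^{-1}|\bJ_0|)\d A_{\s(r)}\d r$ is finite, so the dominated integral of $f$ is also finite, and combining with Lemma~\ref{lemma-HtoE-ADM} we may pass to the limit $r\to\infty$ in $\mathfrak{M}_f$ to conclude
\[
E_{\ADM}-\int_{r_0}^\infty\int_{\bS^2} r^2 f\,\d A_{\s(r)}\d r\ =\ \lim_{r\to\infty}\mathfrak{M}_f(r)\ \ge\ \mathfrak{M}_f(r_0)\ =\ \sqrt{\tfrac{|\S_{r_0}|}{16\pi}}.
\]
Rearranging gives the asserted inequality. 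There is no serious obstacle here—the whole argument is a one-line refinement of Lemma~\ref{lemma-monH}; the only thing one must verify carefully is that the integrability assumption built into the definition of $f$ (together with Lemma~\ref{lemma-int-eta}) is actually strong enough to justify the limit interchange at infinity, which it is.
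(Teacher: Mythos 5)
Your argument is correct and follows essentially the same route as the paper's proof: differentiate $\mathfrak{M}_f$, reuse the monotonicity computation from Lemma~\ref{lemma-monH} up to the bound $\ddt{}{r}m_H(\S_r)\geq\frac{1}{8\pi}\int_{\bS^2}8\pi r^2(\bmu-N^{-1}|\bJ_0|)\,dA_{\s(r)}$, absorb the subtracted $f$-term via $f\le|\bJ|_g-N^{-1}|\bJ_0|$ and the dominant energy condition, and pass to the limit using Lemmas~\ref{lemma-int-eta} and~\ref{lemma-HtoE-ADM}. Your additional remarks on the horizon boundary value and the finiteness needed for the limit are correct and merely make explicit what the paper leaves implicit.
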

		
		\begin{proof}
			Using the calculations in the proof of Lemma \ref{lemma-monH}, we readily establish that
			\begin{align*}
			\textstyle{\ddt{}{r}}\mathfrak{M}_f(r)&=\textstyle{\ddt{}{r}}m_H(\S_r) -  \int_{\bS^2} r^2 f(r,\cdot) \, dA_{\s(r)} \\
			&\geq   \int_{\bS^2} r^2\( \bmu - N^{-1} |\bJ_0| \)\,dA_{\s(r)}- \int_{\bS^2} r^2 (|\bJ|_g - N^{-1}|\bJ_0|) \, dA_{\s(r)}  \\
			&\geq \int_{\bS^2} r^2\( \bmu - |\bJ|_g \)\,dA_{\s(r)}\\
			&\ge 0.
			\end{align*}
			Clearly $\mathfrak{M}_f(r_0) = \sqrt{\frac{|\S_{r_0}|}{16 \pi}}$, and the last statement follows by monotonicity, and the asymptotics of the Hawking energy, Lemma \ref{lemma-HtoE-ADM}.
		\end{proof}
	
		We now turn towards rigidity statements for the above Penrose-like energy inequalities.
		
		\begin{lem}\label{lemma_rigidity_intermediate}
			Assuming the same hypothesis as in Lemma \ref{lemma-PI-f}, if $\mathfrak{M}_f'(r) =0$, then $\s(r)=\s_{\bS^2}$, $N=N(r)$, $f=\bmu=|\bJ|_g$, and $(M,g,K)$ satisfies the constraint equations with $\mu=|\bJ|_g$ and $|\bJ_0|=0$.
		\end{lem}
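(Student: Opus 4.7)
The plan is to trace the chain of inequalities that produced the monotonicity of $\mathfrak{M}_f$ in the proof of Lemma~\ref{lemma-PI-f} and read off the pointwise rigidity forced by saturation of each nonnegative term.

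First, I would recall the explicit formula for $8\pi\,\frac{d}{dr} m_H(\Sigma_r)$ extracted in the proof of Lemma~\ref{lemma-monH}. Since $\{\sigma(r)\}$ is volume preserving by \eqref{eq_volumepreserving}, the area $|\bS^2|_{\sigma(r)}$ is constant and equal to $4\pi$, so Gauß--Bonnet gives $\int_{\bS^2}(1-K(\sigma(r)))\,dA_{\sigma(r)}=0$. Subtracting the $f$-correction from the definition \eqref{eq_ModifiedHawkingmass} of $\mathfrak{M}_f$ then yields
\begin{align*}
8\pi\, \mathfrak{M}_f'(r)
=&\, \int_{\bS^2}\frac{|\nabla^{\sigma(r)}N|^2_{\sigma(r)}}{N^2}\,dA_{\sigma(r)}
   + \int_{\bS^2}\frac{r^2\,|\sigma'(r)|^2_{\sigma(r)}}{8N^2}\,dA_{\sigma(r)} \\
 &\, + 8\pi\int_{\bS^2} r^2\bigl(\bmu - rp\,\bJ_0 - f\bigr)\,dA_{\sigma(r)}.
\end{align*}

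Second, I would record the pointwise three-term bound
\[
\bmu - rp\,\bJ_0 - f \;\geq\; \bmu - N^{-1}|\bJ_0| - f \;\geq\; \bmu - |\bJ|_g \;\geq\; 0,
\]
where the first inequality uses the strictly untrapped condition $r|p|<N^{-1}$, the second uses the standing hypothesis $f\leq |\bJ|_g - N^{-1}|\bJ_0|$, and the third is the DEC. All three integrands in the expression for $8\pi\,\mathfrak{M}_f'(r)$ are therefore pointwise nonnegative.

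Third, setting $\mathfrak{M}_f'(r)=0$ forces each integrand to vanish pointwise on $\bS^2$. From the first two terms I read off $\nabla^{\sigma(r)}N\equiv 0$ (so $N$ depends only on $r$) and $\sigma'(r)\equiv 0$. Saturation of the three-term chain yields simultaneously the pointwise identities $\bmu = |\bJ|_g$, $f = |\bJ|_g - N^{-1}|\bJ_0|$, and $rp\,\bJ_0 = N^{-1}|\bJ_0|$; the last identity is incompatible with strict untrappedness $r|p|<N^{-1}$ unless $\bJ_0\equiv 0$, which then gives $f = |\bJ|_g = \bmu$ as claimed. To upgrade $\sigma'(r)\equiv 0$ to $\sigma(r)=\sigma_{\bS^2}$, I read the hypothesis as $\mathfrak{M}_f'(r)=0$ throughout $[r_0,\infty)$ (the case relevant to the rigidity statement of Theorem~\ref{thm-main}); then $\sigma(r)$ is $r$-independent, and the asymptotic decay condition (iv), namely $\tau=\sigma-\sigma_{\bS^2}\to 0$ as $r\to\infty$, forces $\sigma\equiv\sigma_{\bS^2}$. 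The main delicate point is extracting $\bJ_0\equiv 0$ from the combined saturation of the DEC inequality and the strictly untrapped bound; the remaining conclusions follow directly from the pointwise vanishing of manifestly nonnegative integrands.
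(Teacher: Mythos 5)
Your proposal is correct and follows essentially the same route as the paper: expand $8\pi\,\mathfrak{M}_f'(r)$ into the manifestly nonnegative $|\nabla^{\sigma(r)}N|^2$ and $|\sigma'(r)|^2$ terms plus the matter term, chain the strictly untrapped bound, the hypothesis on $f$, and the DEC, and read off the pointwise saturation conditions, with $\bJ_0\equiv 0$ extracted exactly as in the paper from $rp\,\bJ_0=N^{-1}|\bJ_0|$ versus $r|p|N<1$. Your two explicit remarks — that volume preservation plus Gauß--Bonnet makes the $\int(1-K(\sigma(r)))$ term vanish identically, and that passing from $\sigma'\equiv 0$ to $\sigma=\sigma_{\bS^2}$ uses the vanishing of $\mathfrak{M}_f'$ for all $r$ together with the decay of $\tau$ — are points the paper leaves implicit, and they are handled correctly.
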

		\begin{proof}
			Recall that
			\begin{align*}
			8\pi \textstyle{\ddt{}{r}} m_H(r,\cdot)&= \frac{1}{8\pi}\int_{\bS^2}   \frac{r^2}{8N^2}|\s'(r)|^2_{\s(r)}  \,dA_{\s(r)}+\frac{1}{8\pi}\int_{\bS^2}  \frac{1}{N^2} \btr{\nabla^{\s(r)}N}^2_{\sigma(r)} N\,dA_{\s(r)}\\ &\qquad+\int_{\bS^2}r^2\( \bmu - rp \bJ_0 \)\,dA_{s(r)} \\
			&\geq \int_{\bS^2} 8\pi r^2\( \bmu - rp \bJ_0 \)\,dA_{s(r)} \\
			&\geq \int_{\bS^2} 8\pi r^2\( \bmu - N^{-1} |\bJ_0| \)\,dA_{s(r)}.
			\end{align*}
			
			Therefore,
			\begin{align*}
			\textstyle{\ddt{}{r}}\mathfrak{M}_f(r) &=\frac{1}{8\pi}\int_{\bS^2}   \frac{r^2}{8N^2}|\s'(r)|^2_{\s(r)}  \,dA_{\s(r)} + \frac{1}{8\pi}\int_{\bS^2}  \frac{1}{N^2} \btr{\nabla^{\s(r)}N}^2_{\sigma(r)} N\,dA_{\s(r)}\\ &\qquad+\int_{\bS^2}r^2\( \bmu - rp \bJ_0 \)\,dA_{s(r)}-  \int_{\bS^2} r^2 f(r,\cdot) \, dA_{\s(r)} \\
			&\geq  \int_{\bS^2}  r^2\( \bmu - rp \bJ_0 \)\,dA_{s(r)} - \int_{\bS^2} r^2 (|\bJ|_g - N^{-1}|\bJ_0|) \, dA_{\s(r)} \\
			&\geq \int_{\bS^2} r^2(\bmu-|\bJ|_g) \, d A_{\sigma(r)} \ge 0.
			\end{align*}
			
			Thus, if equality holds, then we directly see that 
			\begin{align*}
			\bmu&=|\bJ|_g,\\ 
			rp \bJ_0&=N^{-1}|\bJ_0|,\\
			\s'(r)&=0,\\
			|\nabla^{\s(r)}N|&=0, \\
			f&=|\bJ|_g - N^{-1}|\bJ_0|.
			\end{align*} 
			Hence $\s(r)=\s_{\bS^2}$, and $N=N(r)$, and as $\btr{p}rN<1$ by the strictly untrapped condition, we further find that $|\bJ_0|=0$. In particular, $\bmu=|\bJ|_{g}=f$. 	
		\end{proof}
		
		\begin{lem}\label{lem_ridigity}
			Assuming the same hypothesis as in Lemma \ref{lemma-PI-f}, $\mathfrak{M}'_f=0$ for some non-negative function $f$ with $f \leq |\bJ|_g -N^{-1} |\bJ_0|$, and assuming further that equality in the latter implies $f\equiv0$. Then, $(M,g,K)$ is isometric to a spacelike, spherically symmetric slice of a Schwarzschild spacetime.
		\end{lem}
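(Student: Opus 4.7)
The starting point is Lemma~\ref{lemma_rigidity_intermediate}, which already yields $\s(r) \equiv \s_{\bS^2}$, $N = N(r)$, $|\bJ_0| \equiv 0$, and $f = \bmu = |\bJ|_g$. Since $|\bJ_0| \equiv 0$, the inequality $f \le |\bJ|_g - N^{-1}|\bJ_0|$ is actually an equality, and the additional standing hypothesis of the lemma then forces $f \equiv 0$, hence $\bmu \equiv 0$ and $\bJ \equiv 0$. In particular, $(M,g,K)$ is vacuum and the induced metric $g = N(r)^2 \d r^2 + r^2 \s_{\bS^2}$ is spherically symmetric.

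To upgrade this to full spherical symmetry of the data, I would show that $p$ and $k$ also depend only on $r$. Substituting $\bJ \equiv 0$, $\pr_I N = 0$ and $n=3$ into the momentum constraints \eqref{eq_momentumPDE1}, \eqref{eq_momentumPDE2} of Lemma~\ref{lem_constraints} yields $k - p = r\,\pr_r p$ and $\pr_I(p+k) = 0$. Combining these gives $2\,\pr_I p + r\,\pr_r(\pr_I p) = 0$ for each angular index $I$, an ODE in $r$ whose solution is $\pr_I p(r,\cdot) = (r_0/r)^2 \,\pr_I p(r_0,\cdot)$. This $r^{-2}$ behaviour is strictly slower than the decay $|\pr_I p| \le C r^{-(b+1)}$, $b > 3/2$, imposed on $\mathcal{M}_{\AF}$, which forces $\pr_I p(r_0,\cdot) \equiv 0$ and therefore $\pr_I p \equiv 0$. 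Hence $p = p(r)$, and then $k = p + r\,\pr_r p$ depends only on $r$ as well.

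With $g$, $K$ spherically symmetric and the vacuum constraints satisfied, I would invoke Birkhoff's theorem: the maximal globally hyperbolic development of such data is isometric to an open subset of a Schwarzschild spacetime, and $(M,g,K)$ is recovered as a spherically symmetric spacelike slice of it. The mass parameter of the ambient Schwarzschild is pinned down by the constancy of the Hawking energy (a consequence of $\mathfrak{M}_f' \equiv 0$ with $f \equiv 0$) combined with Lemma~\ref{lemma-HtoE-ADM}, giving $m = E_{\ADM} = \sqrt{|\S_{r_0}|/(16\pi)} > 0$. The computational heart of the argument is the ODE/decay step establishing the spherical symmetry of $K$; the subsequent Birkhoff-type conclusion is classical, though one could alternatively construct the slice of Schwarzschild explicitly by solving for an embedding function $t = T(r)$ out of the now-rigid formula $N^{-2} = 1 - 2m/r + r^2 p^2$ that comes from $m_H(\S_r) \equiv m$.
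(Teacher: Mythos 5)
Your proposal is correct, and its skeleton coincides with the paper's: invoke Lemma~\ref{lemma_rigidity_intermediate} to get $\s(r)=\s_{\bS^2}$, $N=N(r)$, $\bJ_0=0$ and $f=\bmu=|\bJ|_g$, use the extra hypothesis to force $f\equiv0$ and hence $\bmu=|\bJ|_g=0$, reduce to spherical symmetry of the full data, and finish by the known rigidity of the Penrose inequality in spherical symmetry (equivalently, a Birkhoff-type argument). The one step where you genuinely diverge is how you show $k=k(r)$ and $p=p(r)$. The paper, after obtaining $\pr_I(k+p)=0$ from \eqref{eq_momentumPDE2}, plugs into the Hamiltonian constraint \eqref{eq_hamiltionianPDE} with $\bmu=0$ and reads off that $k^2$ equals a function of $r$ alone (namely $(k+p)^2$ plus terms built from $N(r)$), whence $k=k(r)$ and then $p=p(r)$. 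You instead differentiate the radial momentum constraint $k-p=r\,\pr_r p$ in the angular directions, combine with $\pr_I(k+p)=0$ to get $r\,\pr_r(\pr_I p)=-2\,\pr_I p$, solve this ODE to find $\pr_I p\sim r^{-2}$, and rule out a nonzero solution using the decay $|\pr_I p|\le Cr^{-(b+1)}$ with $b>\tfrac{3}{2}$ built into $\mathcal{M}_{\AF}$. Both routes are valid; the paper's avoids any appeal to the asymptotics at this stage (though it implicitly needs a small continuity/sign argument to pass from $k^2=k(r)^2$ to $k=k(r)$), while yours trades that for an explicit use of the decay conditions and sidesteps the square-root sign issue entirely. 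Your closing remarks on pinning down the mass parameter via the constancy of the Hawking energy and Lemma~\ref{lemma-HtoE-ADM} are consistent with, and slightly more explicit than, the paper's citation of spherically symmetric rigidity.
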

		
		\begin{proof}
			If $\mathfrak{M}'(r)=0$, then $N=N(r)$, $\sigma(r)=\sigma_{\bS^2}$ $\bJ_0=0$, and $\bmu=\btr{\bJ}_g=f$ by Lemma \ref{lemma_rigidity_intermediate}. By assumption $f=0$, so $\bmu=\btr{\bJ}_g=0$. Using the momentum constraint, Lemma \ref{lem_constraints} \eqref{eq_momentumPDE2}, we see that
			\[
				\partial_I p+\partial_I k=0 
			\]
			for all $I$. We conclude that $k+p$ depends only on $r$. Turning to the Hamiltonian constraint, Lemma \ref{lem_constraints} \eqref{eq_hamiltionianPDE} with $\mu=0$, we find that
			\begin{align*}
			k(r,\cdot)^2&=\frac{2}{r} \frac{\pr_r N}{N^3}+\frac{1}{r^2}- \frac{1}{r^2}\frac{1}{N^2}+k(r,\cdot)^2+ 2k(r,\cdot)p(r,\cdot) + p(r,\cdot)^2 \\
			&=\frac{4}{r} \frac{\pr_r N}{N^3}+\frac{2}{r^2}- \frac{2}{r^2}\frac{1}{N^2}+\(k(r,\cdot)+ p(r,\cdot)\)^2
			\end{align*}
			showing that $k=k(r)$, and in consequence that $p=p(r)$. That is, the initial data set is spherically symmetric and saturates the Penrose inequalityn with $E_{\ADM}=m_{\ADM}$. Thus, rigidity follows from the rigidity in spherical symmetry, cf. \cite{Mars,Lee-GRbook}.
		\end{proof}
		
		Thus, we can obtain the full rigidity statement for the refined lower energy bound estimate for all admissible $f$ that satisfy the assumption of Lemma \ref{lem_ridigity}:
		
		\begin{kor}\label{kor_ineqRigid}
			Let $(M,g,K) \in \mathcal{M}_{\AF}$ and $\{ \S_r \}_{r \in [r_0,\infty)} \subset \mathcal{M}$, where $\S_r=\{r\} \times \bS^2$. Suppose that the inner boundary, $\S_{r_0}$, is a generalized apparent horizon and that the family $\{ \S_r \}_{r \in (r_0,\infty)}$ satisfies the strictly untrapped condition. Let $0\le f\le (|\bJ|_g - N^{-1}|\bJ_0|)$ be any non-negative function, such that $f\equiv 0$ if equality holds everywhere. If the dominant energy condition \eqref{eq_DEC} holds, then
			\begin{equation} \label{eq-refinedPTineq}
			E_{\ADM} \ge  \sqrt{\frac{|\S_{r_0}|}{16 \pi}}+\int_{r_0}^{\infty} \int_{\bS^2} r^2 f(r,\cdot) \, dA_{\s(r)}dr,
			\end{equation}
			and equality holds if and only if $(M,g,K)$ is isometric to a spacelike, spherically symmetric slice of a Schwarzschild spacetime.
		\end{kor}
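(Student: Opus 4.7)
The plan is to deduce Corollary~\ref{kor_ineqRigid} as a direct consequence of Lemma~\ref{lemma-PI-f} and Lemma~\ref{lem_ridigity}, together with a short verification of the reverse direction for the rigidity statement.

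First, I would obtain the inequality \eqref{eq-refinedPTineq} immediately from Lemma~\ref{lemma-PI-f}: its hypotheses on $(M,g,K)$, on the foliation $\{\S_r\}$, on the admissible range for $f$, and the dominant energy condition are exactly those assumed here. Integrating the monotonicity $\mathfrak{M}_f'(r)\ge 0$ from $r_0$ to infinity, and combining the initial value $\mathfrak{M}_f(r_0)=\sqrt{|\S_{r_0}|/(16\pi)}$ with the asymptotic identity $\lim_{r\to\infty}m_H(\S_r)=E_{\ADM}$ from Lemma~\ref{lemma-HtoE-ADM}, yields the claimed estimate.

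For the forward rigidity direction, suppose equality holds in \eqref{eq-refinedPTineq}. Then
\[
\lim_{r\to\infty}\mathfrak{M}_f(r)=E_{\ADM}-\int_{r_0}^{\infty}\!\!\int_{\bS^2}r^2 f\,dA_{\s(r)}\,dr=\sqrt{\frac{|\S_{r_0}|}{16\pi}}=\mathfrak{M}_f(r_0),
\]
and since $\mathfrak{M}_f$ is non-decreasing on $[r_0,\infty)$ it must be constant there; hence $\mathfrak{M}_f'(r)\equiv 0$. The additional hypothesis on $f$ (that pointwise saturation $f=|\bJ|_g - N^{-1}|\bJ_0|$ forces $f\equiv 0$) is precisely the extra assumption Lemma~\ref{lem_ridigity} needs, so that lemma applies and shows $(M,g,K)$ is isometric to a spacelike, spherically symmetric slice of Schwarzschild.

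Finally, for the reverse direction I would verify saturation on any spherically symmetric spacelike slice of Schwarzschild: such a slice is vacuum, so $\bmu\equiv 0$ and $\bJ\equiv 0$, hence $|\bJ|_g - N^{-1}|\bJ_0|\equiv 0$, and the hypothesis on $f$ forces $f\equiv 0$. The right-hand side of \eqref{eq-refinedPTineq} thus collapses to $\sqrt{|\S_{r_0}|/(16\pi)}$, which agrees with $E_{\ADM}=m_{\ADM}=m$ via the classical horizon-area identity for Schwarzschild (the inner boundary being the generalized apparent horizon, which in spherical symmetry is the standard event horizon). The analytic work has already been done in Lemmas~\ref{lemma-PI-f}--\ref{lem_ridigity}; the only subtlety to keep track of is that the admissibility condition on $f$ must be invoked consistently in both directions, both to extract rigidity from equality and to ensure that the Schwarzschild case actually lies inside the class of functions $f$ over which the refined inequality is proven.
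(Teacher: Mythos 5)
Your proof is correct and takes exactly the route the paper intends: the inequality is Lemma~\ref{lemma-PI-f}, and equality forces $\mathfrak{M}_f$ to be constant so that $\mathfrak{M}_f'\equiv 0$ and Lemma~\ref{lem_ridigity} applies; the paper states the corollary as an immediate consequence of these two lemmas without writing out the details. Your verification of the converse (Schwarzschild) direction, which the paper leaves implicit, is also correct.
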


\begin{bem}\label{rmk-fchoices}
		Notice that the condition on $f$ above is not difficult to achieve. We give two examples:
		\begin{enumerate}
			\item Let $1>\veps>0$, then take
			\begin{equation*}
			f=(1-\veps)\(|\bJ|_g - N^{-1}|\bJ_0|\).
			\end{equation*}
			\item Consider
			\begin{equation*}
			|\bJ|_g - N^{-1}|\bJ_0| =\frac{|\bJ|_g^2 - N^{-2}|\bJ_0|^2 }{|\bJ|_g + N^{-1}|\bJ_0| }=\frac{r^{-2}|\bJ^{T}|_{\s(r)}^2  }{|\bJ|_g + N^{-1}|\bJ_0|} \geq  \frac{r^{-2}|\bJ^{T}|_{\s(r)}^2  }{2|\bJ|_g}=f.
			\end{equation*}
		\end{enumerate}
\end{bem}

		Furthermore, $f\equiv0$ is always admissible and we recover the full Penrose Inequality in spherical symmetry as a special case of Corollary \ref{kor_ineqRigid}. In fact, Corollary \ref{kor_ineqRigid} with $f=0$ slightly extends the Penrose Inequality outside of spherical symmetry, as the ADM momentum $\mathbf{P}$ also vanishes for initial data sets in $\mathcal{M}_{AF}$ that are merely spherical symmetric outside a large compact set. Moreover, as $(|\bJ|_g - N^{-1}|\bJ_0|)\not=0$ in the large compact set in general, this even yields a refined lower bound on the ADM mass in this case. Altogether, this observation motivates the following conjecture:
		\begin{conj}\label{conj1}
		Let $(M,g,K) \in \mathcal{M}_{\AF}$ and $\{ \S_r \}_{r \in [r_0,\infty)} \subset \mathcal{M}$, where $\S_r=\{r\} \times \bS^2$. Suppose that the inner boundary, $\S_{r_0}$, is a generalized apparent horizon and that the family $\{ \S_r \}_{r \in (r_0,\infty)}$ satisfies the strictly untrapped condition. Then,
			\[
			|\mathbf{P}|\le \int_{r_0}^{\infty} \int_{\bS^2} r^2 (|\bJ|_g - N^{-1}|\bJ_0|)  \, dA_{\s(r)}dr.
			\]

		\end{conj}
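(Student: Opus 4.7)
The plan is to adapt the modified Hawking-mass monotonicity of Lemma~\ref{lemma-PI-f} to a vector-valued setting.  Specifically, I would seek a quasilocal momentum $\vec{\mathfrak{P}}(r):[r_0,\infty)\to \R^3$ defined on the canonical foliation $\{\S_r\}$ that (a) vanishes at $r=r_0$ by virtue of the generalized apparent horizon condition, (b) satisfies $\lim_{r\to\infty} \vec{\mathfrak{P}}(r)=\mathbf{P}$, and (c) admits the pointwise differential estimate $\btr{\tfrac{d}{dr}\vec{\mathfrak{P}}(r)} \le \int_{\bS^2} r^2(|\bJ|_g - N^{-1}|\bJ_0|)\,dA_{\s(r)}$.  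Integrating (c) from $r_0$ to $\infty$ and invoking (a) and (b) then gives Conjecture~\ref{conj1}.  Guided by~\eqref{eq-P-ADM}, the first natural candidate is $\frac{1}{4\pi}\int_{\bS^2}pN^2 r^2\vec\nu\,dA_{\s(r)}$, adjusted by a counterterm built from the horizon data so as to enforce (a).

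For the bulk, using the volume-preserving condition~\eqref{eq_volumepreserving} and the radial momentum constraint~\eqref{eq_momentumPDE1} one computes
\[
\tfrac{d}{dr}\int_{\bS^2} p N^2 r^2 \vec\nu\,dA_{\s(r)} = \int_{\bS^2}\bigl[r N^2 (k+p) + 2 p N r^2 \pr_r N - 4\pi N^2 r^2 \bJ_0\bigr]\vec\nu\,dA_{\s(r)}.
\]
Exploiting that $\vec\nu$ is a first spherical harmonic ($\Delta_{\s_{\bS^2}}\vec\nu = -2\vec\nu$) and that $\s(r) \to \s_{\bS^2}$ exponentially, integration by parts on $\bS^2$, valid modulo exponentially decaying errors, trades $(k+p)$ for its gradient.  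The angular momentum constraint~\eqref{eq_momentumPDE2} then rewrites $\nabla^{\s(r)}(k+p)=(k-p)N^{-1}\nabla^{\s(r)}N - 8\pi \bJ^T$, producing a $\bJ^T$-term plus a quadratic-in-$\nabla N$ piece which one would combine with the $\pr_r N$ contribution through the Hamiltonian constraint~\eqref{eq_hamiltionianPDE}.  A Cauchy--Schwarz pairing of $\bJ^T$ against the tangential projection $\vec\nu^T$, together with the identity $|\bJ|_g - N^{-1}|\bJ_0| = \tfrac{r^{-2}|\bJ^T|^{2}_{\s(r)}}{|\bJ|_g + N^{-1}|\bJ_0|}$, should deliver the pointwise bound in~(c), with the $\bJ_0$ contribution absorbed through~\eqref{eq_hamiltionianPDE} and the strictly untrapped condition $N^{-1}>r|p|$.

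The hard part will be the boundary term: the apparent horizon condition $|p|N=1/r_0$ forces $\frac{1}{4\pi}\int_{\bS^2} pN^2 r_0^2 \vec\nu\,dA_{\s(r_0)} = \pm\frac{r_0}{4\pi}\int_{\bS^2} N\vec\nu\,dA_{\s(r_0)}$, which does not vanish in general, and it is unclear that any counterterm decaying at infinity can both cancel it at $r_0$ and remain compatible with the bulk estimate.  A secondary difficulty is that, unlike in the scalar energy computation, the auxiliary $|\nabla N|^2$ and $|\s'(r)|^2$ terms arising along the way cannot simply be discarded on the right-hand side of a \emph{vector} inequality, since there is no analog of the Gauss--Bonnet absorption employed in Lemma~\ref{lemma-monH}.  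These obstructions suggest that Conjecture~\ref{conj1} may genuinely require a new quasilocal momentum—perhaps of Brown--York or spinorial flavor—rather than a direct vectorialization of Lemma~\ref{lemma-PI-f}; identifying such a quantity with the correct horizon behavior and ADM limit is the main open step.
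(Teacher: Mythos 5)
The statement you are trying to prove is stated in the paper as Conjecture~\ref{conj1}; the authors give no proof of it (they only observe that, combined with Corollary~\ref{kor_ineqRigid}, it would upgrade the energy inequality to the full Penrose inequality with rigidity). So there is no argument in the paper to compare yours against, and your proposal does not close the gap either: by your own admission, items (a)--(c) are desiderata for a quasilocal momentum that you do not actually construct, and the two obstructions you flag at the end (the non-vanishing horizon boundary term, and the impossibility of discarding the sign-indefinite $\btr{\nabla^{\s(r)}N}^2$ and $|\s'(r)|^2$ contributions inside a vector-valued inequality where Gauss--Bonnet is unavailable) are genuine and unresolved. What you have written is a research plan, not a proof.

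Beyond the gaps you acknowledge, the bulk estimate itself has a hole. After your (correct) computation
\[
\tfrac{d}{dr}\int_{\bS^2} p N^2 r^2 \vec\nu\,dA_{\s(r)} = \int_{\bS^2}\bigl[r N^2 (k+p) + 2 p N r^2 \pr_r N - 4\pi N^2 r^2 \bJ_0\bigr]\vec\nu\,dA_{\s(r)},
\]
the term $-4\pi\int_{\bS^2} N^2 r^2 \bJ_0\,\vec\nu\,dA_{\s(r)}$ cannot be ``absorbed through~\eqref{eq_hamiltionianPDE}'': $\bJ_0$ does not appear in the Hamiltonian constraint at all, and this flux is controlled by the \emph{angular variation} of $N^2\bJ_0$ (only the mean-free part survives against $\vec\nu$), which is not bounded by $r^{-2}|\bJ^T|^2_{\s(r)}/(|\bJ|_g+N^{-1}|\bJ_0|)$ in general. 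Likewise, trading $k+p$ for $\nabla^{\s(r)}(k+p)$ via $\Delta_{\s_{\bS^2}}\vec\nu=-2\vec\nu$ is only valid against the round metric, so the ``exponentially decaying errors'' involve $\tau=\s(r)-\s_{\bS^2}$ multiplied by $k+p$, which is merely $O(r^{-b})$ and must still be integrated in $r$; this is plausible but needs to be checked, not waved at. Finally, note that the conjectured right-hand side vanishes identically whenever $\bJ_I\equiv 0$, so any successful argument must show $\mathbf{P}=0$ in that case; your candidate functional gives no mechanism for this beyond spherical symmetry. Until a quantity satisfying (a)--(c) is exhibited, the conjecture remains open.
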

		Conjecture \ref{conj1} suggests that $\bJ_I=0$ would imply $\mathbf{P}=0$. This seems to be consistent with our construction below in Section \ref{sec_solving}. Notice that Conjecture \ref{conj1} would imply the Penrose Inequality with rigidity, as it would follow that
		\begin{align*}
		\sqrt{\frac{|\S_{r_0}|}{16 \pi}}&\le E_{\ADM}-\int_{r_0}^{\infty} \int_{\bS^2} r^2 (|\bJ|_g - N^{-1}|\bJ_0|)  \, dA_{\s(r)}dr\\
		&\le E_{\ADM}-|\mathbf{P}|\\
		&\le \sqrt{E_{\ADM}^2-|\mathbf{P}|^2}\\
		&=m_{\ADM},
		\end{align*}
		where equality would imply $\int_{r_0}^{\infty} \int_{\bS^2} r^2 (|\bJ|_g - N^{-1}|\bJ_0|)  \, dA_{\s(r)}dr=|\mathbf{P}|=0$, so ridigity would follow from Lemma \ref{lem_ridigity}.
		
		Finally, we consider the totally umbilic case, i.e., $K=pg$ with $k=p$, where the rigidity statement follows without the above restrictions on the choice of $f$.
		
		\begin{kor} \label{coro-p-equals-k}
			Let $(M,g,K) \in \mathcal{M}_{\AF}$ and $\{ \S_r \}_{r \in [r_0,\infty)} \subset \mathcal{M}$, where $\S_r=\{r\} \times \bS^2$. Suppose that the inner boundary, $\S_{r_0}$, is a generalized apparent horizon and that the family $\{ \S_r \}_{r \in (r_0,\infty)}$ satisfies the strictly untrapped condition. Suppose that the dominant energy condition \eqref{eq_DEC} holds and suppose that $f \leq |\bJ|_g -N^{-1} |\bJ_0|$. If in addition $p=k$, then the equality 
			\begin{equation*}
			E_{\ADM} =\sqrt{\frac{|\S_{r_0}|}{16 \pi}}+\int_{r_0}^{\infty} \int_{\bS^2} r^2 f(r,\cdot) \, dA_{\s(r)}dr
			\end{equation*}
			implies $k=p=0$ (in particular $K \equiv 0$) and that $(M,g)$ is isometric to the (time-symmetric) Schwarzschild manifold outside its horizon.
		\end{kor}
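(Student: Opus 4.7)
The plan is to combine Lemma \ref{lemma_rigidity_intermediate} with the structural consequences of the totally umbilic assumption $p=k$, so as to dispense with the auxiliary hypothesis on $f$ present in Lemma \ref{lem_ridigity}. First, since $\mathfrak{M}_f$ is monotone by Lemma \ref{lemma-PI-f} with $\mathfrak{M}_f(r_0)=\sqrt{\btr{\Sigma_{r_0}}/(16\pi)}$ and $\lim_{r\to\infty}\mathfrak{M}_f(r)=E_{\ADM}-\int_{r_0}^{\infty}\int_{\bS^2}r^2 f\,dA_{\sigma(r)}\,dr$, the equality hypothesis forces $\mathfrak{M}'_f\equiv 0$ on $[r_0,\infty)$. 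Lemma \ref{lemma_rigidity_intermediate} then immediately provides $\sigma(r)=\sigma_{\bS^2}$, $N=N(r)$, $\bJ_0=0$, and $\bmu=\btr{\bJ}_g=f$.

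Next, I feed $p=k$ into the momentum constraints of Lemma \ref{lem_constraints}. Equation \eqref{eq_momentumPDE1} with $n=3$, $k=p$, and $\bJ_0=0$ reduces to $\partial_r p=0$, so $p$ depends only on the angular coordinates. Equation \eqref{eq_momentumPDE2}, using $k=p$ and $N=N(r)$, collapses to $\bJ_I=-\frac{1}{4\pi}\partial_I p$. The key observation is that $p$ is $r$-independent, while the definition of $\mathcal{M}_{AF}$ demands the decay $\btr{p}\le Cr^{-b}$ with $b>\nicefrac{3}{2}$. Taking $r\to\infty$ pointwise in the angular variables forces $p\equiv 0$, whence $K\equiv 0$, $\bJ\equiv 0$, $\bmu\equiv 0$, and consequently $f\equiv 0$.

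Once $K\equiv 0$ is established, the data set is time-symmetric and spherically symmetric (via $\sigma=\sigma_{\bS^2}$ and $N=N(r)$), scalar flat (from Lemma \ref{lem_constraints} \eqref{eq_hamiltionianPDE} with $\bmu=0$), and the inner boundary $\Sigma_{r_0}$ is a minimal surface: indeed $P=2p=0$ together with $\mathcal{H}(\Sigma_{r_0})=0$ forces $H(\Sigma_{r_0})=0$. Moreover $E_{\ADM}=\sqrt{\btr{\Sigma_{r_0}}/(16\pi)}$ since $f\equiv 0$. Rigidity then follows by integrating the ODE for $N(r)$ coming from the Hamiltonian constraint, namely $\partial_r N=\frac{N(1-N^2)}{2r}$, which under the substitution $u=N^{-2}$ becomes $(ru)'=1$ and yields the unique solution $N^2=(1-2m/r)^{-1}$ with $m=r_0/2$ compatible with the boundary condition $N(r_0)=\infty$ and the asymptotics $N\to 1$. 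Alternatively one may invoke the rigidity of the Riemannian Penrose inequality of Huisken--Ilmanen/Bray, since the strictly untrapped foliation precludes any interior minimal surface.

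The main obstacle of the argument is the step $p\equiv 0$: the momentum constraints only yield that $p$ is $r$-independent, and it is the interplay with the asymptotic decay built into $\mathcal{M}_{AF}$ that finally eliminates $p$. Everything downstream is then either a routine computation or a standard rigidity result.
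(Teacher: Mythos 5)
Your proposal is correct and follows essentially the same route as the paper: equality forces $\mathfrak{M}_f'\equiv 0$, the intermediate rigidity lemma gives $\bJ_0=0$ so that the radial momentum constraint with $k=p$ yields $\partial_r p=0$, and the decay in $\mathcal{M}_{\AF}$ then kills $p$, reducing everything to the time-symmetric spherically symmetric case. Your explicit integration of the resulting ODE for $N(r)$ is just a spelled-out version of the paper's appeal to rigidity in spherical symmetry.
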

		\begin{proof}
			Suppose that $k=p$ and that equality holds. By Lemma \ref{lem_ridigity} we know that $\bmu=|\bJ|_g$ with $|\bJ_0|=0$, so we can conclude that $p$ is independent from $r$ as the momentum constraint, as Lemma \ref{lem_constraints} \eqref{eq_momentumPDE1} reduces to
			\[
				2\partial_rp=0.
			\]
			Since $(M,g,K)$ is asymptotically flat, we necessarily have $p=k=0$ everywhere. Thus $\bmu=|\bJ|_g=0$ and we can conclude that we can embed $(M,g,K)$ isometrically as a spacelike, spherically symmetric slice into the Schwarzschild manifold. Since $K\equiv0$, $(M,g,0)$ is isometric to the (time-symmetric) exterior Schwarzschild manifold.
		\end{proof}
		
	\subsection{The asymptotically hyperbolic and hyperboloidal case}\label{subsec_hyperboloid}
		We can similarly discuss the monotonicity of the Hawking energy within the family $\mathcal{M}$ in the asymptotically hyperbolic case with $\Lambda=-\frac{(n-2)(n-1)}{2}<0$. For $n=3$ we have $\Lambda=-6$, and in this case, the Hawking energy is defined as
		\begin{align}
			m_H^{AH}(\Sigma)=\sqrt{\frac{\btr{\Sigma}}{16\pi}}\left(1-\frac{1}{16\pi}\int H^2-P^2\d A+\frac{\btr{\Sigma}}{4\pi}\right).
		\end{align}
		Note that the monotonicity in the family $\mathcal{M}$ follows essentially from the same computations as above (see also \cite{CCGP}). Note that as computed in \cite{miaotamxie} in a more general setting, we can also reconfirm that
		\begin{align}\label{eq_HawkinglimitAHcase}
			\lim_{r\to\infty}m_H^{AH}(\Sigma_r)=E_{AH},
		\end{align}
		under our assumptions similar to Lemma \ref{lemma-HtoE-ADM}, as the decay assumptions on $K$ ensure that the limit does not depend on $P$, and hence agrees with the limit in time-symmetry. 
		
		Moreover, we note  that in both the asymptotically flat and the asymptotically hyperbolic case the monotonicity does not depend on the asymptotic assumptions on $(M,g,K)$, which instead ensure that one can extract a well-defined limit at infinity. Here, we are particularly interested in the totally umbilic case due to its rigidity properties in the context of a Penrose-type inequality (Corollary \ref{coro-p-equals-k}) and the fact that in this case the constraints decouple and we can solve them as an evolutionary systems, see Section \ref{subsec_umbilic} below. Thus, for the case $\Lambda=0$ it is also natural to consider asymptotically hyperboloidal initial data sets $(M,g,K)$, where we assume that $(M,g)$ is asymptotically hyperbolic and $K\to g_{\mathbb{H}}$ at an approriate rate. More precisely, here we say say an initial data set $(M,g,K)\in\mathcal{M}$ is asymptotically hyperboloidal if $(M,g,K-g_{\mathbb{H}})\in \mathcal{M}$ is asymptotically hyperbolic as defined in Section \ref{sec_prelim}.
		
		Indeed, such asymptotically hyperboloidal initial data sets arise naturally as totally umbilic slices in the spherical symmetric case in a large class of spacetimes. In fact, in these examples they have constant umbilicity factor, i.e., $p=k=\operatorname{const}$. For details, we refer to work by the second named author \cite{wolff}.
		
		If we impose enough decay on $K$ for an asymptotically hyperboloidal initial data set $(M,g,K)$ such that
		\begin{align}\label{eq_moredecayP}
			p^2=1+\widetilde{p}^2,
		\end{align}
		where $\widetilde{p}\in O(r^{b})$, $b>\frac{n}{2}$, then
		\[
			m_H(\Sigma_r)=m_H^{AH}(\Sigma_r),
		\]
		where $m_H^{AH}$ is computed with respect to the asymptotically hyperbolic initial data set $(M,g,\widetilde{p}g)$. In particular, the Hawking energy (as defined in \eqref{eq_hawkingmass}) of the familiy $\{ \Sigma_r \}$ in $(M,g,K)$ converges to the total energy of the asymptotically hyperbolic initial data set $(M,g,\widetilde{p}g)$. These observations are consistent with the work of Chrusciel--Todd \cite{C-T} on the correspondence between positive constant mean curvature initial data sets with $\Lambda=0$ and maximal initial data sets with $\Lambda<0$.
		
		Moreover, under the decay assumption \eqref{eq_moredecayP} this duality further extends to solutions of the constraint equations as an evolutionary system, at least in the cases discussed below in Section \ref{sec_solving}. That is, if we impose enough decay on the initial conditions of the momentum constraint, that is on $k$ and $\bJ$ in the spherically symmetric case, and on $\bJ_0$ in the totally umbilic case, respectively, then we may solve for $p$ such that it will satisfy \eqref{eq_moredecayP}. Then, in the case of $\Lambda=0$ and \eqref{eq_moredecayP}, we may solve the Hamiltonian constraint as in the case for $\Lambda=-\frac{(n-1)(n-2)}{2}$, cf. Proposition \ref{prop_sphericalsymmetry} and Theorem \ref{thm-existence-totally-umbilical}. Note that in particular, the solution $N$ of the Hamiltonian constraint is the same, and hence so is the resulting the manifold $(M,g)$.

		\section{Solving the constraint equations}\label{sec_solving}
		In this section, we discuss the solvability of the constraint equations \eqref{eq-CC-ham}-\eqref{eq-CC-mom} as an evolutionary system. We recall that by Lemma \ref{lem_constraints}, the constraint equations \eqref{eq-CC-ham}-\eqref{eq-CC-mom} for initial data sets in $\mathcal{M}$, assuming the family of metric $\{\s(s)\}$ to be given, can be equivalently written as system of a parabolic equation for the lapse $N$ and an ODE on $p$ given by
		\begin{align}
			\begin{split}\label{eq_hamiltionianPDE_solve}
			\frac{2(n-1)}{r} \pr_r N&=\frac{2N^2}{r^2}\Lap_{\s(r)}N  - \frac{R(\s(r))}{r^2}N^3  + \frac{(n-1)(n-2)}{r^2}N + \frac{N}{4}|\s'(r)|^2_{\s(r)}  \\
			&\quad +\( - 2(n-1)k(r,\cdot)p(r,\cdot) - (n-1)(n-2)p(r,\cdot)^2\)N^3 \\
			&\qquad\qquad + \( 16\pi\bmu + 2\Lambda\)N^3,
			\end{split}\\
			(n-1)\pr_r p(r,\cdot)&=(n-1)r^{-1}\left(k(r,\cdot)-p(r,\cdot)\right)  -8\pi \bJ_0, \label{eq_momentumPDE1_solve},
		\end{align}
		which are coupled by the angular components of the Hamiltonian constraint
		\begin{align}
			0 &=\left(k(r,\cdot)-p(r,\cdot)\right)N^{-1}\dds{N}{x^I}  -(n-2) \pr_Ip(r,\cdot)-\pr_I k(r,\cdot) -  8 \pi \bJ_I,\label{eq_momentumPDE2_solve}
		\end{align}
		where the energy density $\bmu$, momentum density $\bJ$ and the extrinsic curvature component $k$ are the prescribed data and we solve for $N$ and $p$. The idea to rewrite the constraints as an evolutionary system was first studied by Bartnik \cite{Bartnik-QS} in the time-symmetric case using a quasi-spherical construction -- later Sharples \cite{Sharples} studied the non time-symmetric case for quasi-spherical metrics obtaining short time existence via an iteration scheme. The general coupled system in the form \eqref{eq_hamiltionianPDE_solve}-\eqref{eq_momentumPDE2_solve} was obtained by Racz in \cite{Racz}, where he shows short time existence relying on results of Bartnik \cite{Bartnik-QS}.
		
		Here, taking advantage of the geometric features of the family $\mathcal{M}_{\AF}$ we analytically solve the evolutionary system \eqref{eq_hamiltionianPDE_solve}-\eqref{eq_momentumPDE2_solve} in spherical symmetry and in the totally umbilic case ($p=k$). In both cases, the system decouples and we can first independently solve for $p$ and then solve for $N$. Our construction is not restricted to $n=3$ and we pose sufficient and necessary conditions for the resulting initial data set to be flat when $\Lambda=0$. We also briefly discuss the solvability in the asymptotically hyperbolic case (when $\Lambda \le 0$). As outlined in Section \ref{subsec_hyperboloid} the approach for the asymptotically hyperbolic case will also yield a construction for the closely related asymptotically hyperboloidal case, when $\Lambda=0$ but we assume that $k\to 1\not=0$ at an appropriate rate, cf. Equation \eqref{eq_moredecayP}. 
		
		Given an asymptotically flat initial data set $(M,g,K)\in \mathcal{M}_{AF}$, we see that the asymptotic conditions on $N$, $p$ and $k$ yield that $\bmu\in O(r^{-c})$, $\bJ\in O(r^{-b-1})$ for $c:=\min(2+a,2b)$, and we note that we may always assume that $b=a+1$ for convenience without loss of generality.

		Recall that we additionally require that $\bmu$, $\bJ$ are integrable on $(M,g)$. By the assymptotic decay of $N$ and the fact that the volume element is preserved along the family $\sigma(r)$ tending to the round sphere, we see that the integrability of $\bmu$ and $\bJ$ is equivalent to 
			\begin{align}
				\int_{r_0}^\infty &\int_{\bS^{n-1}} r^{n-1}\bmu \, dA_{\bS^{n-1}}  dr<\infty,\label{eq_integrabilityconditionmMu}\\
				\int_{r_0}^\infty &\int_{\bS^{n-2}} r^{n-1}\sqrt{\bJ_0^2+r^{-2}\sigma_{\bS^2}^{IK}\bJ_I \bJ_K}\, dA_{\bS^{n-1}} dr<\infty\label{eq_integrabilityconditionmJ}.
			\end{align}
		In the following, we will always assume the desired decay assumptions on $\bmu$, $\bJ$ and $k$ as mentioned above. Moreover, as we are only interested in initial data sets with integrable energy and momentum densities, we will always assume  \eqref{eq_integrabilityconditionmMu}-\eqref{eq_integrabilityconditionmJ}; in fact, they are always needed in our construction.

	\subsection{The spherically symmetric case}
	
		We assume that the given familiy of metrics $\{\s(r)\}$ is given by the round metric $\s(r)=\s_{\bS^{n-1}}$ for all $r$, and that the prescribed data is given by functions $\bmu$, $\bJ_0$, $\bJ_I$, and $k$ only depending on $r$. We want to solve for spherically symmetric initial data sets $(M,g,K)$ in $\mathcal{M}$, i.e., we solve for $N=N(r)$, $p=p(r)$. Under these assumptions, we see that \eqref{eq_momentumPDE2_solve} forces $\bJ_I\equiv 0$ and Equations \eqref{eq_hamiltionianPDE_solve}-\eqref{eq_momentumPDE1_solve} decouple and reduce to the following first order ODE system:
		\begin{align*}
		(n-1)\pr_r p(r)&=-(n-1)r^{-1}p(r) + (n-1)r^{-1}k(r)  -8\pi \bJ_0, \\
		0 &=8 \pi \bJ_I, \\
		\frac{2(n-1)}{r} \pr_r N&= - \frac{(n-1)(n-2)}{r^2}N^3  + \frac{(n-1)(n-2)}{r^2}N \\
		&\quad -\(  2(n-1)k(r)p(r) + (n-1)(n-2)p(r)^2  \)N^3 \\
		&\qquad\qquad + \( 16\pi\bmu + 2\Lambda\)N^3.
		\end{align*}
		For now we will not impose any restriction to the sign of $\Lambda$ and define the auxiliary function $f:=1-\frac{2}{n(n-1)}\Lambda r^2-\frac{1}{N^2}$. Since $f'=\frac{-4}{n(n-1)}\Lambda r+\frac{2N'}{N^3}$, we see that the above ODE system becomes
		\begin{align}
			p'&=-\frac{p}{r}+\left(\frac{k}{r}-\frac{8\pi}{n-1}\bJ_0\right),\label{eq_ODEp}\\
			f'&=-\frac{(n-2)}{r}f+\left(\frac{16\pi r}{n-1}\bmu-r(2kp+(n-2)p^2)\right)\label{eq_ODEf}.
		\end{align}
		We first solve the inhomogeneous ODE for $p$ and then use the solution $p$ to solve the inhomogeneous ODE for $f$ using the method of variation of constants. The homogeneous part of \eqref{eq_ODEp} is given by 
		\[
			p_h'=-\frac{p_h}{r}
		\]
		with solutions of the form $p_h=\frac{C}{r}$ for some constant $C$. Now considering a solution $p$ of \eqref{eq_ODEp} of the form $p(r)=C(r)p_h$, we see that
		\begin{align}\label{eq_solutionP1}
			p(r)=\frac{1}{r}\left(p_0+\dint_{r_0}^{r} s\bc_1(s)\,ds\right)
		\end{align}
		where $\bc_1(r):= \frac{k(r)}{r} -\frac{8\pi}{n-1} \bJ_0$, and $p_0$ an arbitrary real constant. By setting
		\begin{align*}
		\bc_2(r)= -r\(  2k(r)p(r) + (n-2)p(r)^2  \) + \frac{r}{n-1}16\pi\bmu,
		\end{align*}
		we can similarly proceed for $f$ and obtain
		\begin{align}\label{eq_solutionF1}
			f(r)=\frac{1}{r^{n-2}}\left[f_0 + \int_{r_0}^{r} \bc_2(s)s^{n-2} \, ds  \right]
		\end{align}
		for some constant $f_0$.
		
		Let us now discuss the asymptotic behavior of the solutions and restrict our considerations to $\Lambda \le 0$ and assume that the given functions $\bmu,\bJ_0,k\colon (0,\infty)\to \R$ satisfy the above decay conditions and the integrability conditions \eqref{eq_integrabilityconditionmMu}-\eqref{eq_integrabilityconditionmJ} on $(0,\infty)$. By the asymptotic decay on $k$ and $\bJ_0$, we see that
		\[
			\dint_{r_0}^{\infty} s\bc_1(s)\,ds<\infty,
		\]
		and we can thus write the solution $p$ as 
		\[
			p(r)=\frac{1}{r}\left(\widetilde{p}_0-\dint_{r}^{\infty} s\bc_1(s)\,ds\right)
		\]
		for some constant $\widetilde{p}_0$, and we see that $p$ only satisfies the desired decay if we impose $\widetilde{p}_0=0$. Hence, the desired solution is given by
		\begin{align}\label{eq_solutionP2}
		p(r) = -\frac{1}{r} \dint_{r}^{\infty} s\bc_1(s)\,ds
		\end{align}
		without any freedom to choose the initial value at an interior boundary. Turning towards the desired decay of the metric components, we note that we achieve the desired decay for both the asymptotically flat case ($\Lambda=0$) and the asymptotically hyperbolic case ($\Lambda <0$) if $f$ decays at the same appropiate rate for both cases. More precisely, we require $f\in O_{1}(r^{-a})$\footnote{recall that we do not impose any decay on the second derivatives of $f$, as all the relevant curvature components only depend on first derivatives of $f$ in $r$, cf. the Hamiltonian constraint in Lemma \ref{lem_constraints}.}  for $a>\frac{n-2}{2}$. However, by the decay of $k$, $p$ and the integrability condition \eqref{eq_integrabilityconditionmMu}, we see that
		\[
			\int_{r_0}^{\infty} \bc_2(s)s^{n-2} \, ds<\infty,
		\]
		thus $f\le Cr^{-n+2}$. Hence, the aquired solutions $p$ and $f$ yield an asymptotically flat initial data set $(M,g,K)$ as long as the resulting metric is Riemannian up to the boundary, i.e., we choose $f_0$ in \eqref{eq_solutionF1} and $r_0>0$ such that $N>0$ on $(r_0,\infty)$ and $N(r_0)\ge 0$, where the constant $f_0$ also determines the value at the chosen boundary $r=r_0$. As we assume $\Lambda \geq 0$, this can always be done by choosing an appropriate $f_0$. We collect the above discussion in the following proposition:
		\begin{prop}\label{prop_sphericalsymmetry}
			Let $n\ge 3$, $\Lambda \le 0$ and consider smooth functions $\bmu, \bJ_0,k:(0,\infty)\to\R$ satisfying $k\in O(r^{-b})$, $\bJ_0\in O(r^{-b-1})$, and $\bmu\in O(r^{-c})$ for constants $b>\frac{n}{2}$, $c>\frac{n+2}{2}$. Assume further that $\bmu$ and $\bJ$ satisfy \eqref{eq_integrabilityconditionmMu} and \eqref{eq_integrabilityconditionmJ} on $(0,\infty)$. Let $r_0>0$, then there exists $f_0\in\R$ such that the solution $f$ of \eqref{eq_ODEf} as in \eqref{eq_solutionF1} satisfies
			\[
				1-\frac{2}{n(n-1)}\Lambda r^2\ge f
			\]
			on $[r_0,\infty)$, where the inequality is strict on $(r_0,\infty)$. \newline
			In particular, there exists a spherically symmetric initial data set $(M,g,K)$ in $\mathcal{M}_{\AF}$ with
			\begin{align*}
				M&=[r_0,\infty)\times \bS^{n-1},\\
				g&=N^2 dr^2+r^2\s_{\bS^{n-1}},\\
				K&=N^2k dr^2+pr^2\s_{\bS^{n-1}},
			\end{align*}
			where $N=\scriptsize\( \sqrt{1-\frac{2\Lambda}{n(n-1)}r^2- f} \)^{-1}$ and $p$ is the solution of \eqref{eq_ODEp} as in \eqref{eq_solutionP2}, satisfying the constraint equations \eqref{eq-CC-ham}-\eqref{eq-CC-mom} with energy density $\bmu$ and energy momentum $\bJ=\bJ_0 dr$. Additionaly, $(M,g,K)$ is asymptotically flat if $\Lambda=0$ and asymptotically hyperbolic if $\Lambda <0$.
		\end{prop}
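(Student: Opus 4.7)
The plan is to formalize the analysis sketched before the proposition. The spherically symmetric ansatz $\s(r)\equiv\s_{\bS^{n-1}}$, $N=N(r)$, $p=p(r)$ annihilates all angular derivatives, so \eqref{eq_momentumPDE2_solve} forces $\bJ_I\equiv 0$, and with the substitution $f:=1-\tfrac{2}{n(n-1)}\Lambda r^2-N^{-2}$ the Hamiltonian and radial momentum constraints \eqref{eq_hamiltionianPDE_solve}-\eqref{eq_momentumPDE1_solve} reduce to the linear first-order ODEs \eqref{eq_ODEp}-\eqref{eq_ODEf}. These are decoupled in the order $p$ then $f$, and their integrating factors $r$ and $r^{n-2}$ yield the general solutions \eqref{eq_solutionP1}-\eqref{eq_solutionF1}, each carrying one free constant.

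First, I would pin down the unique asymptotically admissible solution of the momentum ODE. The decay $k\in O(r^{-b})$, $\bJ_0\in O(r^{-b-1})$ with $b>\tfrac{n}{2}$ together with \eqref{eq_integrabilityconditionmJ} imply $\int_{r_0}^\infty s\,|\bc_1(s)|\,ds<\infty$, so \eqref{eq_solutionP1} may be rewritten as $p(r)=r^{-1}\bigl(\widetilde{p}_0-\int_r^\infty s\bc_1(s)\,ds\bigr)$ for some constant $\widetilde{p}_0$. The requirement $p\in O(r^{-b})$ with $b>1$ then forces $\widetilde{p}_0=0$, which singles out the solution \eqref{eq_solutionP2}; differentiating once gives $\partial_r p\in O(r^{-b-1})$.

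Next, with $p$ determined, $\bc_2$ is explicit with decay $\bc_2\in O(r^{1-c})+O(r^{1-2b})$, so $c>\tfrac{n+2}{2}$ and \eqref{eq_integrabilityconditionmMu} give $\int_{r_0}^\infty|\bc_2(s)|\,s^{n-2}\,ds<\infty$. The key remaining freedom is $f_0$; I would choose $f_0<-\int_{r_0}^\infty|\bc_2(s)|\,s^{n-2}\,ds$ so that $r^{n-2}f(r)<0$ for every $r\ge r_0$. Since $\Lambda\le 0$ gives $1-\tfrac{2\Lambda r^2}{n(n-1)}\ge 1>0$, the strict inequality $f<1-\tfrac{2\Lambda r^2}{n(n-1)}$ then holds on $[r_0,\infty)$, and $N:=\bigl(1-\tfrac{2\Lambda r^2}{n(n-1)}-f\bigr)^{-1/2}$ is smooth and positive there.

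Finally, reversing the derivation of \eqref{eq_ODEp}-\eqref{eq_ODEf} from Lemma \ref{lem_constraints} confirms that the triple $(M,g,K)$ built from $N$, $p$, $k$ solves the constraint equations with energy density $\bmu$ and momentum density $\bJ=\bJ_0\,dr$. The decay of $N-1$ and $\partial_r N$ follows from the estimates $f\in O(r^{2-n})$ and $\partial_r f\in O(r^{1-n})$ extracted from \eqref{eq_solutionF1} and its radial derivative, while the decay of $p$, $\partial_r p$, and $k$ is immediate. For $\Lambda=0$ this matches the definition of $\mathcal{M}_{\AF}$ verbatim; for $\Lambda<0$ the rescaling $\widetilde{N}=N\sqrt{1+r^2}$ from Section~\ref{sec_prelim} identifies $(M,g,K)$ as asymptotically hyperbolic. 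The principal technical step is tracking the sharp decay of $f$ and $\partial_r f$ consistently from the explicit integral \eqref{eq_solutionF1}; everything else is direct bookkeeping with the decay and integrability hypotheses.
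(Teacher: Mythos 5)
Your proposal is correct and follows essentially the same route as the paper: reduce to the decoupled ODEs \eqref{eq_ODEp}--\eqref{eq_ODEf}, fix the unique decaying solution $p$ by forcing the constant $\widetilde{p}_0=0$, and then use the remaining freedom in $f_0$ together with the integrability of $\bc_2 s^{n-2}$ to keep $N^{-2}$ positive. Your explicit choice $f_0<-\int_{r_0}^\infty|\bc_2(s)|s^{n-2}\,ds$ makes concrete the paper's one-line assertion that an appropriate $f_0$ exists, which is a welcome bit of added precision.
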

		Note that we in fact get $f\in O(r^{-n+2})$ in Proposition~\ref{prop_sphericalsymmetry}. In particular, we see that the ADM energy is finite directly via Equation \eqref{eq-E-ADM}.
		\begin{bem}\label{bem_sphericalsymmetry}
			By choosing $r_0$ and $f_0$ appropiately, we retain the freedom to prescribe any non-negative boundary data for $N^{-1}$ at $r_0$. Note that $(N(r_0))^{-1}=0$ corresponds to a minimal boundary, and the inner boundary is given by a generalized apparent horizon if $(N(r_0))^{-1}=\btr{r_0p(r_0)}$, which is either a MOTS or a MITS depending on the sign of $p$ at $r_0$.
			
			For $\bJ_0\equiv 0$, an observation by the second named author \cite[Remark 4.2]{wolff} shows that spherical symmetric initial data sets $(M,g,K)$ as considered here embed into a static spacetime $(\mathbf{M},\mathbf{g})$ of the form
			\begin{align*}
				\mathbf{M}&=\R\times (r_1,\infty)\times \bS^{n-1},\\
				\mathbf{g}&=-h dt^2+\frac{1}{h}dr^2+r^2\s_{\bS^{n-1}},
			\end{align*}
			with $h:=\frac{1}{N^2}-r^2p^2$ and $r_1\ge r_0$ is chosen as the largest zero of $h$ (or as $0$ if $h>0$ everywhere). Note that $\{r=r_1\}$ corresponds to a Killing horizon of $(\mathbf{M},\mathbf{g})$ and the part of $(M,g,K)$ corresponding to $(r_0,r_1)$ is hidden behind the horizon. As apparent horizons arise as cross sections of such Killing horizons, we may choose $r_1=r_0$ if the inner boundary of $(M,g,K)$ is given as an outermost generalized apparent horizon. In fact, $h$ is exactly defined such that any zero of $h$ corresponds to a generalized apparent horizon in $(M,g,K)$.
			
			Hence, combining Proposition \ref{prop_sphericalsymmetry} setting $\bJ_0=0$ with the observations of the second author in \cite{wolff}, we can in fact construct an exterior region, given as a static spacetime as above, for prescribed functions $\bmu$, $k$ and a freely chosen constant $f_0$. Under the decay assumptions of Proposition \ref{prop_sphericalsymmetry}, the resulting spacetime approaches the Minkowski spacetime for $\Lambda=0$ and the Anti deSitter spacetime for $\Lambda<0$, respectively, with the same decay rate. If $h$ only posseses finitely many, simple zeros for a given choice of $f_0$, then a construction of generalized Kruskal--Szekeres coordinates by Cederbaum and the second author \cite{C-W} yields a spacetime extension of the exterior region that covers the whole range of radii $(0,\infty)$. See also \cite{brillhayward, schindagui}.
		\end{bem}
	\subsection{The totally umbilic case}\label{subsec_umbilic}
		We now consider the totally umbilic case, where we consider a given familiy of metrics $\{\s(s)\}$ satisying \eqref{eq_volumepreserving} and \eqref{eq_exponentialconvergence} and allow the described data and desired solution to depend on the radial and angular coordinates, but impose that $K=pg$, i.e., $k=p$, where we do not prescribe $k$ as initial data in this case. Similar to the spherical symmetric case, the system decouples and can be solved seperately (first solving for $p$), where we aim to construct a solution $N$ of the parabolic equation
		\begin{align}
			\begin{split}\label{eq_hamiltionianPDE_solve2}
			r\pr_r N&=\frac{N^2}{n-1}\Lap_{\s(r)}N  +\left(\frac{(n-2)}{2}+\frac{1}{8(n-1)}|\s'(r)|^2_{\s(r)}r^2\right)N\\
			&\qquad -\frac{1}{2(n-1)}\(R(\s(r))-\left(16\pi\bmu + 2\Lambda-(n-1)np^2\right)r^2\)N^3,
			\end{split}
		\end{align}
		where we multiplied \eqref{eq_hamiltionianPDE_solve} by $\frac{r^2}{2(n-1)}$ and used $p=k$, by first finding a solution $p$ of the overdetermined first order system
		\begin{align}
			\begin{split}\label{eq_momentumODE_solve2}
				\pr_r p(r,\cdot)=-4\pi \bJ_0,\\
				\pr_I p(r,\cdot)=-4\pi \bJ_I.
			\end{split}
		\end{align}
		Integrating the first equation in \eqref{eq_momentumODE_solve2} with respect to $r$ yields that any solution $p$ is of the form
		\begin{align*}
		p(r,\cdot)=C-\int_{r_0}^r4\pi \bJ_0\,ds,
		\end{align*}
		where $C=C(x^J)$ is a function independent of $r$, and it is immediate to see that $C$ in fact represents the boundary value of $p$, i.e., $p(r_0,\cdot)\equiv C$. Now taking an angular derivative of the above expression, we see that 
		\begin{align}\label{eq-umbilicalcompatibility1}
		-4\pi \bJ_I=\pr_IC-\int_{r_0}^r4\pi \pr_I\bJ_{0}\, ds.
		\end{align}
		Thus, we can only solve the first order system \eqref{eq_momentumODE_solve2} if and only if the data $\bJ$ and $C$ satisfy \eqref{eq-umbilicalcompatibility1} as a compatibility condition. We now briefly discuss the asymptotic properties of the solution $p$. By the decay assumptions on $\bJ_0$ (see Theorem \eqref{thm-existence-totally-umbilical} below), we see that
		\[
			\int_{r_0}^\infty\bJ_{0}\, ds<\infty,
		\]
		and we can rewrite $p$ as
		\begin{equation*}
		p(r,\cdot)=\widetilde{C}+\int_{r}^\infty 4\pi \bJ_0\, ds,
		\end{equation*}
		where $\widetilde{C}$ is an appropiate function independent of $r$. It is immediate to see that we require $\widetilde{C}=0$ for the desired decay of $p$. In particular, we see that
			\begin{align*}
		p(r,\cdot)=\int_{r}^{\infty}4\pi \bJ_0\,
		\end{align*}
		and
		\begin{align}
		\bJ_I(r,\cdot)= -\int_{r_0}^\infty  \pr_I\bJ_0 \, ds  + \int_{r_0}^r \pr_I\bJ_{0}\, ds =  -\int_{r}^\infty  \pr_I\bJ_0 \, ds.
		\end{align}
		Moreover, the constant of integration $C$ satisfies
		\begin{align}\label{eq-umbilicalcompatibility2}
		C=\int_{r_0}^\infty 4\pi \bJ_0 \, ds.
		\end{align}
		Hence, we can solve the momentum constraint in the totally umbilic case for $p=p(r,\cdot)$, with the appropriate decay for an asymptotically flat initial data set, if and only if the data $\bJ$ and $C$ satisfy the two compatibility conditions \eqref{eq-umbilicalcompatibility1} and \eqref{eq-umbilicalcompatibility2}, and in addition, $\bJ_0$ and its angular derivates decay sufficiently fast. In particular, we solve for the unknown $p$ by prescribing only a single function $\bJ_0$ and construct the rest of the data $\bJ$ and $C$ from $\bJ_0$.
		
		Turning towards solving Equation \eqref{eq_hamiltionianPDE_solve2}, we recall that the term
		\[ 
			16\pi\bmu +  2\Lambda-(n-1)np^2
		\]
		given by the prescribed data agrees exactly with the scalar curvature by the Hamiltonian constraint \eqref{eq-CC-ham} Thus, we may appeal to the construction of Riemannian manifolds of prescribed scalar curvature building on Bartnik's quasi-spherical metrics \cite{Bartnik-QS}. In particular, we will use the the results of Lin \cite{Lin} and Jang \cite{Jang} in the asymptotically flat and asymptotically hyperbolic case, respectively. This leads to the following theorem:
		\begin{thm} \label{thm-existence-totally-umbilical}
			Let $\{\s(s)\}$ be a family of metrics on $\bS^{n-1}$ satisfying \eqref{eq_volumepreserving} and \eqref{eq_exponentialconvergence}. Let $r_0>0$ and $\Lambda\le 0$ and consider functions $\bmu,\bJ_0\colon [r_0,\infty)\times\bS^{n-1}\to\R$ with $\bmu\in O(r^{-c})$, $\bJ_0\in O(r^{-b-1})$ for constants $c>\frac{n+2}{2}$, $b>\frac{n}{2}$. Assume further that 
			\[
				\bJ_I:=-\int_r^\infty \partial_I \bJ_0\, ds\in  O(r^{-b-1}),
			\]
		 	and that $\bmu$, $\bJ=\bJ_0dr+\bJ_I dx^I$ satisfy the integrability condition \eqref{eq_integrabilityconditionmMu}-\eqref{eq_integrabilityconditionmJ}.\newline
			Then there exists an totally umbilic initial data set $(M,g,K)$ in $\mathcal{M}$ with
			\begin{align*}
				M&=[r_0,\infty)\times\bS^{n-1},\\
				g&=N^2dr^2+r^2\s(r),\\
				K&=p g,
			\end{align*}
			solving the constraint equations \eqref{eq-CC-ham}-\eqref{eq-CC-mom}, where $N$, $p$ are solutions of \eqref{eq_hamiltionianPDE_solve2}-\eqref{eq_momentumODE_solve2}. Further, $(M,g,K)$ is asymptotically flat if $\Lambda=0$ and asymptotically hyperbolic if $\Lambda <0$, respectively.
		\end{thm}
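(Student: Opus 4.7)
My plan is to exploit the decoupling of the totally umbilic constraint system already identified in the discussion preceding the theorem: first construct the extrinsic curvature factor $p$ as the unique decaying solution of the overdetermined momentum system \eqref{eq_momentumODE_solve2}, and then solve the remaining Hamiltonian equation \eqref{eq_hamiltionianPDE_solve2} for $N$ by viewing it as a prescribed scalar curvature problem and invoking the existence theorems of Lin \cite{Lin} for $\Lambda=0$ and Jang \cite{Jang} for $\Lambda<0$.

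For the first step, I define
\[
p(r,\cdot)\defeq \int_r^\infty 4\pi\,\bJ_0(s,\cdot)\,ds,
\]
which is finite and smooth because $\bJ_0\in O(r^{-b-1})$ with $b>\tfrac{n}{2}$. The fundamental theorem of calculus gives $\partial_r p=-4\pi\bJ_0$, and differentiating under the integral in an angular direction together with the hypothesis $\bJ_I=-\int_r^\infty\partial_I\bJ_0\,ds$ yields $\partial_I p=-4\pi\bJ_I$. These identities are exactly the overdetermined system \eqref{eq_momentumODE_solve2}, and equivalently they encode the compatibility condition \eqref{eq-umbilicalcompatibility1} automatically, so the momentum constraint (Lemma~\ref{lem_constraints}, \eqref{eq_momentumPDE1}--\eqref{eq_momentumPDE2}) is satisfied with $k=p$. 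The decay $\bJ_0\in O(r^{-b-1})$ directly gives $p\in O(r^{-b})$ with correspondingly decaying angular derivatives.

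For the second step, with $p$ now fixed, the Hamiltonian constraint \eqref{eq_hamiltionianPDE_solve2} becomes a parabolic PDE for $N$ alone, in which the combination
\[
\widetilde{R}(r,\cdot)\defeq 16\pi\bmu + 2\Lambda - n(n-1)p^{2}
\]
enters as a prescribed scalar curvature source via the Hamiltonian constraint \eqref{eq-CC-ham}. Modulo this substitution, \eqref{eq_hamiltionianPDE_solve2} is precisely the quasi-spherical parabolic equation of Bartnik \cite{Bartnik-QS} generalised to a non-round family $\{\s(r)\}$. Consequently the existence theory of Lin \cite{Lin} applies directly in the asymptotically flat setting, and that of Jang \cite{Jang} applies in the asymptotically hyperbolic setting after the rescaling $\widetilde{N}=N\sqrt{1+r^{2}}$ introduced in Section~\ref{sec_prelim}. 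Using the decay hypotheses $\bmu\in O(r^{-c})$ with $c>\tfrac{n+2}{2}$, the decay of $p$ from the first step, the integrability conditions \eqref{eq_integrabilityconditionmMu}--\eqref{eq_integrabilityconditionmJ}, and the exponential convergence \eqref{eq_exponentialconvergence}, the source $\widetilde R$ has the decay and integrability needed to apply these theorems, yielding a positive lapse $N$ on $[r_0,\infty)\times\bS^{n-1}$ with the asymptotics required for $(M,g,K)\in\mathcal{M}_{\AF}$ (respectively its asymptotically hyperbolic counterpart).

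The main obstacle I anticipate lies precisely in the verification at the interface with \cite{Lin,Jang}: one must match the pointwise decay, positivity, and integrability assumptions of their parabolic existence results to our specific source $\widetilde{R}$, including the contribution of the geometric term $R(\s(r))$ and the curvature-like term $|\s'(r)|^{2}_{\s(r)}$ appearing in \eqref{eq_hamiltionianPDE_solve2}, and in the hyperbolic case one must propagate those estimates consistently through the rescaling $\widetilde{N}=N\sqrt{1+r^{2}}$. Once this is done, the triple $(M,g,K)$ with $K=pg$ solves the constraints by construction, and asymptotic flatness or hyperbolicity follows from the decay of $N$, $p$, and $\s(r)-\s_{\bS^{n-1}}$.
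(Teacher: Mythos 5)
Your proposal follows essentially the same route as the paper: construct $p=\int_r^\infty 4\pi\bJ_0\,ds$ to solve the decoupled momentum system (with the compatibility condition built in by the hypothesis on $\bJ_I$), then treat \eqref{eq_hamiltionianPDE_solve2} as a prescribed--scalar--curvature parabolic problem and invoke Lin for $\Lambda=0$ and Jang for $\Lambda<0$. The only point to flag is that Lin's and Jang's theorems are stated for $n=3$, so for general $n$ the phrase ``applies directly'' needs the dimensional extension of their barrier construction, which the paper carries out in its appendix.
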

		\begin{proof}
			We aquire a solution $p$ of \eqref{eq_momentumODE_solve2} with the desired decay as discussed above, so we are left to show that \eqref{eq_hamiltionianPDE_solve2} admits a solution with the desired decay. We define
			\[
				\overline{R}:=16\pi\bmu + 2\Lambda-(n-1)np^2.
			\]
			For $n=3$ and appropiate initial data for $N$ at $r_0$, we may directly apply the results of Lin \cite{Lin} if $\Lambda=0$ and of Jang \cite{Jang} if $\Lambda=-3$, respectively. By going over the proofs in Lin~\cite{Lin} and  Jang~\cite{Jang} of the asymptotic decay using Schauder estimates, one can check that the weaker decay assumptions here lead to the desired asymptotics, i.e., the resulting initial data set $(M,g,K)$ is asymptotically flat or asymptotically hyperbolic as defined in Section \ref{sec_prelim}.
			
			In fact, the results of Lin~\cite{Lin} and Jang~\cite{Jang} further extend to higher dimensions. We refer to Appendix \ref{appendix} for a brief overview of the necessary adjustments, in particular a derivation for the barriers in all dimensions.
		\end{proof}
		\begin{bem}
			In the asymptotically flat case, Lin~\cite{Lin} further provided a condition for admissible data for the mean curvature $H$ at the interior boundary. In particular, one can always construct initial data with minimal inner boundary. However, in the context of non-time symmetric initial data it is desirable for the boundary to be given by a generalized apparent horizon. From \cite{Lin}, we can directly construct such initial data with $p=k$, if we additionally have
			\begin{align}\label{eq_weird}
				16\pi^2\left(\int_r^\infty \bJ_0\right)^2>\sup\limits_{1\le t\le \infty}\left[\int_1^t\left(8\pi\tau^2\bmu-\frac{R(\sigma(\tau))}{2}-3\tau^2p^2\right)_*\exp\left(\int_1^\tau\frac{s\btr{\sigma'(\tau)}^{*2}}{8}  ds\right)d\tau\right],
			\end{align}
			where
			\[
			f_*(r):=\inf\limits_{S^{n-1}}f(r,\cdot)\text{, and }f^*(r):=\sup\limits_{S^{n-1}}f(r,\cdot)
			\]
			for a function $f \colon M \To \R$.
		\end{bem}

	\section{Comments and future directions}\label{sec_discussion}

The family of (non time-symmetric) initial data sets $\mathcal{M}_{\AF}$ can be thought as a \emph{slight} generalization of rotational symmetry, in the sense that, at infinity, the level sets $r=\textnormal{constant}$ are almost round near infinity. However, it is general enough to be a rich family that allows non trivial momentum vector while being computationally convenient. The following are natural future directions to pursue to exploit the structure of the initial data sets discussed in this work.

The content of Theorem~\ref{thm-existence-totally-umbilical} is a new step towards finding solutions to the constraint equations in the form of a evolutionary system suggested by R\'acz in \cite{Racz}. It is plausible that the structure imposed in $\mathcal{M}_{\AF}$ could lead to more general results that the ones presented here, i.e., when $k \neq p$. 

We have seen that the members of the class $\mathcal{M}_{AF}$ satisfy the positive Penrose-type energy inequalities with the appropriate rigidity statement as derived in Section \ref{sec_penrose}. It is thus natural to ask about the stability of these results. For example, is there any non-trivial choice of $f$ such that for an initial data set $(M,g,K)$ whose energy is close to  $\sqrt{\frac{|\S_{r_0}|}{16 \pi}}+\int_{r_0}^{\infty} \int_{\bS^2} r^2 f(r,\cdot) \, dA_{\s(r)}\,dr$,  is $(M,g,K)$ close to a spherically symmetric spacelike slice of the  Schwarzschild spacetime with mass $E_{\ADM}(M,g,K)$? The relevance and context of these type of questions and the state of the art is very well explained and motivated in a recent survey by Sormani \cite{Sormani-scalar} which we recommend to consult. In particular, we want to mention the work of Mantoulidis and Schoen \cite{M--S} and generalizations to various different settings (see \cite{CCM,ACC,CCGP})  that suggest the Penrose Inequality to be unstable. The collar extensions constructed by Mantoulidis and Schoen in \cite{M--S} are in fact members of the class $\mathcal{M}_{AF}$ in time symmetry. This suggests that the Penrose-type energy inequalities, and by virtue of their construction indeed the full Penrose inequality, should be unstable in the class $\mathcal{M}_{AF}$ by adapting their construction to the non time symmetric case.

Moreover, we note that Mantoulidis and Schoen used these collar extensions in \cite{M--S} to compute the Bartnik mass in the minimal case (see also \cite{CM}). We believe it is possible to adapt this construction using the structure in $\mathcal{M}$ to obtain initial data sets with prescribed (MOTS) boundary geometry to attempt to compute the Bartink mass (appropriately defined) for non time-symmetric initial data sets with a MOTS. If this is successful, similar considerations could be taken to extend the minimal case to the non time-symmetric case in the asymptotically hyperbolic and charged case \cite{ACC,CCM,CCGP}.

Additionally, certain choices of $f$, in particular in divergence form, might lead to a new notion of quasi-local momentum and due to the monotonicity in this special case, this possibly could lead to new considerations regarding the Penrose Inequality.

\appendix
\section{Solving the Hamiltonian constraint in higher dimensions}\label{appendix}

We briefly outline how to extend the work of Lin \cite{Lin} in the asymptotically flat case, and the work of Jang \cite{Jang} in the asmypotitcally hyperbolic case to higher dimensions. The crucial step is the derivation of the appropriate barriers. 
Recall that $N$ satisfies the equation
\begin{align}
\begin{split}\label{eq_hamiltionianPDE_appendix}
r\pr_r N&=\frac{N^2}{n-1}\Lap_{\s(r)}N  +\left(\frac{(n-2)}{2}+\frac{1}{8(n-1)}|\s'(r)|^2_{\s(r)}r^2\right)N\\
&\qquad -\frac{1}{2(n-1)}\(R(\s(r))-\overline{R}r^2\)N^3,
\end{split}
\end{align}
where
\[
	\overline{R}:=16\pi\bmu + 2\Lambda-2(n-1)kp-(n-1)(n-2)p^2,
\]
and we treat $\overline{R}$ as prescribing the desired scalar curvature on $(M,g)$. Here, we always assume that $p$ and $k$ are given functions satisfying the desired decay. Recall from Section \ref{sec_solving} that the constraint equations decouple in the spherically symmetric and the totally umbilic case within the class of initial data sets considered here, and we may first solve the momentum constraint to obtain $p$ as desired. As described in Section \ref{sec_solving} it thus remains to solve the Hamiltonian constraint as a parabolic system with prescribed scalar curvature $\overline{R}$, as in the work of Lin \cite{Lin} and Jang \cite{Jang}. Under the decay assumptions on $p$ and $k$, and the integrability condition \eqref{eq_integrabilityconditionmMu}, we note that
\begin{align}\label{eq_integrabilityR}
	\int_{r_0}\int_{\bS^{n-1}}r^{n-1}\overline{R} \, dV_{\mathbb{S}^{n-1}} dr<\infty.
\end{align}

Let us first consider the asymptotically flat case with $\Lambda=0$: We then consider the function $\omega:=N^{-2}$. Using \eqref{eq_hamiltionianPDE_appendix}, we see that $\omega$ satisfies
\begin{align}
	\begin{split}\label{eq_evolutionomega}
		\partial_r\omega=&\,\frac{1}{(n-1)r}\omega^{-1}\Delta_{\sigma(r)}\omega+\frac{3}{(n-1)}\frac{N}{r}{}^{\s(r)}\!\nabla N\cdot{}^{\s(r)}\!\nabla \omega\\&\,-\left(\frac{n-2}{r}+\frac{r}{4(n-1)}\btr{\s'(r)}^2_{\s(r)}\right)\omega
		+\frac{1}{r}\left(\frac{R(\s(r))-\overline{R}r^2}{(n-1)}\right).
	\end{split}
\end{align}
ODE comparrison yields for all $r_0\ge 1$.
\begin{align*}
N^{-2}(r,\cdot)&\ge \delta_*^n(r)+\left(\frac{r_0}{r}\right)^{n-2}\left(u^*(r_0)^{-2}-\delta_*^n(r_0)\right)\exp\left(-\int\limits_{r_0}^r\frac{s\left(\btr{\s'(s)}^{2}_{\s(s)}\right)^*}{4(n-1)} \, ds\right),\\
N^{-2}(r,\cdot)&\le \delta_n^*(r)+\left(\frac{r_0}{r}\right)^{n-2}\left(u_*(r_0)^{-2}-\delta_n^*(r_0)\right)\exp\left(-\int\limits_{r_0}^r\frac{s\left(\btr{\s'(s)}^{2}_{\s(s)}\right)_*}{4(n-1)} \, ds\right),
\end{align*}
with barriers
\begin{align*}
\delta_*^n(r)&=\frac{1}{r^{n-2}}\int\limits_1^r\left[\tau^{n-3}\frac{R(\sigma(\tau))}{n-1}-\tau^{n-1}\frac{\overline{R}}{n-1}\right]_*\exp\left(-\int\limits_{\tau}^r\frac{s\left(\btr{\s'(s)}^{2}_{\s(s)}\right)^*}{4(n-1)}\,ds\right)\,d\tau,\\
\delta_n^*(r)&=\frac{1}{r^{n-2}}\int\limits_1^r\left[\tau^{n-3}\frac{R(\sigma(\tau))}{n-1}-\tau^{n-1}\frac{\overline{R}}{n-1}\right]^*\exp\left(-\int\limits_{\tau}^r\frac{s\left(\btr{\s'(s)}^{2}_{\s(s)}\right)_*}{4(n-1)}\,ds\right) \,d\tau,
\end{align*}
where for a function $f:M\to\R$ we define
\[
	f_*(r):=\inf\limits_{S^{n-1}}f(r,\cdot)\text{, and }f^*(r):=\sup\limits_{S^{n-1}}f(r,\cdot).
\]
As $\delta_*^n(t)$ remains finite for all radii $r\ge1$, a sufficient condition for \eqref{eq_hamiltionianPDE_appendix} to be parabolic is that the lower bound remains strictly positive for all times $t\ge t_0=1$. This depends on our choice of initial data $\varphi\equiv N(1,\cdot)$. More precisely, similar as in Lin \cite{Lin}, \eqref{eq_hamiltionianPDE_appendix} is parabolic if 
\[
	0<\varphi<\frac{1}{\sqrt{K}},
\]
where 
\[
	K:=\sup\limits_{1\le r< \infty}\left\{ -\int\limits_1^r\left[\tau^{n-3}\frac{R(\sigma(\tau))}{n-1}-\tau^{n-1}\frac{\overline{R}}{n-1}\right]_*\exp\left(\int\limits^{\tau}_1\frac{s\left(\btr{\s'(s)}^{2}_{\s(s)}\right)^*}{4(n-1)}\,ds\right)\,d\tau\right\}.
\]
Note that $K$ is finite by \eqref{eq_integrabilityR} and the exponential convergence of $(\s(r))$ to the round metric. Proceeding from here exactly as in Lin \cite{Lin} Schauder theory yields global existence, and the desired decay on $N$ follows from Schauder estimates. Note that the barriers and $K$ are consistent with the work of Lin \cite{Lin} for $n=3$.

Let us now also briefly consider the asymptotically hyperbolic case with $\Lambda=-\frac{(n-1)(n-2)}{2}$: Note that the equations regarding $N$ and $\omega$ as above remain unchanged, as the only difference in both cases is the difference in the prescribed scalar curvature $\overline{R}$, which differs by the constant $2\Lambda$. Now, to compensate for $\Lambda<0$, we proceed as Jang in \cite{Jang} and consider $u$ such that
\[
	N=\frac{u}{\sqrt{1+r^2}},
\]
and define $\widetilde{\omega}=u^{-2}$. Recall that $u$ is chosen precisely as in our definition of asymptotic hyperbolicity, cf. Section \ref{sec_prelim}, and we note that $\omega=(1+r^2)\widetilde{\omega}$. Thus, \eqref{eq_evolutionomega} yields
\begin{align}
\begin{split}\label{eq_evolutionomega2}
\partial_r\widetilde{\omega}=&\,\frac{1}{(n-1)r(1+r^2)}\widetilde{\omega}^{-1}\Delta_{\sigma(r)}\widetilde{\omega}+\frac{3}{(n-1)}\frac{u}{r(1+r^2)}{}^{\s(r)}\!\nabla u\cdot{}^{\s(r)}\!\nabla \widetilde{\omega}\\&\,-\left(\frac{n-2}{r}+\frac{2r}{1+r^2}+\frac{r}{4(n-1)}\btr{\s'(r)}^2_{\s(r)}\right)\widetilde{\omega}
+\frac{1}{r(1+r^2)}\left(\frac{R(\s(r))-\overline{R}r^2}{(n-1)}\right).
\end{split}
\end{align}
ODE comparrison yields for all $r_0\ge 1$
\begin{align*}
u^{-2}(r,\cdot)&\ge \delta_*^n(r)+\left(\frac{r_0}{r}\right)^{n-2}\frac{1+r_0^2}{1+r^2}\left(u^*(r_0)^{-2}-\delta_*^n(r_0)\right)\exp\left(-\int\limits_{r_0}^r\frac{s\left(\btr{\s'(s)}^{2}_{\s(s)}\right)^*}{4(n-1)}\, ds\right),\\
u^{-2}(r,\cdot)&\le \delta_n^*(r)+\left(\frac{r_0}{r}\right)^{n-2}\frac{1+r_0^2}{1+r^2}\left(u_*(r_0)^{-2}-\delta_n^*(r_0)\right)\exp\left(-\int\limits_{r_0}^r\frac{s\left(\btr{\s'(s)}^{2}_{\s(s)}\right)_*}{4(n-1)}\, ds\right),
\end{align*}
with barriers
\begin{align*}
\delta_*^n(r)&=\frac{1}{r^{n-2}(1+r^2)}\int\limits_1^r\left[\tau^{n-3}\frac{R(\sigma(\tau))}{n-1}-\tau^{n-1}\frac{\overline{R}}{n-1}\right]_*\exp\left(-\int\limits_{\tau}^r\frac{s\left(\btr{\s'(s)}^{2}_{\s(s)}\right)^*}{4(n-1)}\,ds\right)\,d\tau,\\
\delta_n^*(r)&=\frac{1}{r^{n-2}(1+r^2)}\int\limits_1^r\left[\tau^{n-3}\frac{R(\sigma(\tau))}{n-1}-\tau^{n-1}\frac{\overline{R}}{n-1}\right]^*\exp\left(-\int\limits_{\tau}^r\frac{s\left(\btr{\s'(s)}^{2}_{\s(s)}\right)_*}{4(n-1)}\,ds\right)\,d\tau.
\end{align*}
As in the work of Jang \cite{Jang}, parabolicity for the evolution of $u$ is ensured for initial data $\varphi$ at $r=1$, if
\[
	0<\varphi<\frac{1}{\sqrt{K}},
\]
where
\[
K:=\sup\limits_{1\le r< \infty}\left\{ -\int\limits_1^r\left[\tau^{n-3}\frac{R(\sigma(\tau))}{n-1}-\tau^{n-1}\frac{\overline{R}}{n-1}\right]_*\exp\left(\int\limits^{\tau}_1\frac{s\left(\btr{\s'(s)}^{2}_{\s(s)}\right)^*}{4(n-1)}\,ds\right)\,d\tau\right\}.
\]
Note that $K$ is the same constant as in the asymptotically flat case (up to the difference in $\overline{R}$ by $2\Lambda$, but as $\Lambda<0$ this yields a negative contribution only decreasing $K$) and hence finite by our assumption. To prove global existence and establish the desired decay on $u$, one then proceeds as outlined by Jang in \cite{Jang}. Note that the barriers and $K$ are consistent with the work of Jang \cite{Jang} for $n=3$.
\bibliography{refs-IDS}

\nopagebreak
\bibliographystyle{plain}
\end{document}